\newtheorem{theorem}{Theorem}[section]
\newtheorem{lemma}[theorem]{Lemma}
\newtheorem{corollary}[theorem]{Corollary}
\newtheorem{definition}[theorem]{Definition}
\newtheorem{remark}[theorem]{Remark}
\journal{Transportation Research Part B}
\begin{document}

\begin{frontmatter}



 \begin{center}
\textcolor{blue}{ARTICLE LINK:  http://www.sciencedirect.com/science/article/pii/S0191261515002039
\\  PLEASE CITE THIS ARTICLE AS\\ 
Han, K., Piccoli, B., Friesz, T.L., 2015. Continuity of the path delay operator for dynamic network loading with spillback. Transportation Research Part B, DOI: 10.1016/j.trb.2015.09.009.}
 \line(1,0){469}
 \end{center}

\title{Continuity of the path delay operator for dynamic network loading with spillback}


\author[ic]{Ke Han \corref{cor}}
\ead{k.han@imperial.ac.uk}

\author[math]{Benedetto Piccoli}
\ead{piccoli@camden.rutgers.edu}

\author[ie]{Terry L. Friesz} 
\ead{tfriesz@psu.edu}

\cortext[cor]{Corresponding author}

\address[ic]{Department of Civil and Environmental Engineering, Imperial College London, United Kingdom.}
\address[math]{Department of Mathematical Sciences and CCIB, Rutgers University - Camden, USA}
\address[ie]{Department of Industrial and Manufacturing Engineering, Pennsylvania State University, USA.}

\begin{abstract}
This paper establishes the continuity of the path delay operators for {\it dynamic network loading} (DNL) problems based on the Lighthill-Whitham-Richards model, which explicitly capture vehicle spillback. The DNL describes and predicts the spatial-temporal evolution of traffic flow and congestion on a network that is consistent with established route and departure time choices of travelers. The LWR-based DNL model is first formulated as a system of {\it partial differential algebraic equations} (PDAEs). We then investigate the continuous dependence of merge and diverge junction models with respect to their initial/boundary conditions, which leads to the continuity of the path delay operator through the {\it wave-front tracking} methodology and the {\it generalized tangent vector} technique. As part of our analysis leading up to the main continuity result, we also provide an estimation of the minimum network supply without resort to any numerical computation. In particular, it is shown that gridlock can never occur in a finite time horizon in the DNL model.
\end{abstract}

\begin{keyword}
path delay operator \sep continuity \sep dynamic network loading \sep LWR model  \sep spillback \sep gridlock
  
\end{keyword}

\end{frontmatter}

\section{Introduction}

Dynamic traffic assignment (DTA) is the descriptive modeling of time-varying flows on traffic networks consistent with traffic flow theory and travel choice principles. DTA models describe and predict departure rates, departure times and route choices of travelers over a given time horizon. It seeks to describe the dynamic evolution of traffic in networks in a fashion consistent with the fundamental notions of traffic flow and travel demand; see \cite{PZ} for some review on DTA models and recent developments. {\it Dynamic user equilibrium} (DUE) of the open-loop type, which is one type of DTA, remains a major modern perspective on traffic modeling that enjoys wide scholarly support. It captures two aspects of driving behavior quite well: departure time choice and route choice \citep{Friesz1993}. Within the DUE model, travel cost for the same trip purpose is identical for all utilized route-and-departure-time choices. The relevant notion of travel cost is {\it effective travel delay}, which is a weighted sum of actual travel time and arrival penalties.

In the last two decades there have been many efforts to develop a theoretically sound formulation of dynamic network user equilibrium that is also a canonical form acceptable to scholars and practitioners alike. There are two essential components within the DUE models: (1) the mathematical expression of Nash-like equilibrium conditions; and (2) a network performance model, which is, in effect, an embedded {\it dynamic network loading} (DNL) problem. The DNL model captures the relationships among link entry flow, link exit flow, link delay and path delay for any given set of path departure rates. The DNL gives rise to the notion of {\it path delay operator}, which is viewed as a mapping from the set of feasible path departure rates to the set of path travel times or, more generally, path travel costs.

Properties of the path delay operator are of fundamental importance to DUE models. In particular, continuity of the delay operators plays a key role in the existence and computation of DUE models. The existence of DUEs is typically established by converting the problem to an equivalent mathematical form and applying some version of Brouwer's fixed-point existence theorem; examples include \cite{existence, SW, WTC} and \cite{ZM}. All of these existence theories rely on the continuity of the path delay operator. On the computational side of analytical DUE models, every established algorithm requires the continuity of the delay operator to ensure convergence; an incomplete list of such algorithms include the fixed-point algorithm \citep{FHNMY}, the route-swapping algorithm \citep{HL}, the descent method \citep{HL2003}, the projection method \citep{HL2002, UHD}, and the proximal point method \citep{E-DUE}

It has been difficult historically to show continuity of the delay operator for general network topologies and traffic flow models. Over the past decade, only a few continuity results were established for some specific traffic flow models. \cite{ZM} use the link delay model \citep{Friesz1993} to show the continuity of the path delay operator. Their result relies on the {\it a priori} boundedness of the path departure rates, and is later improved by a continuity result that is free of such an assumption \citep{ldmcont}. In \cite{BH2}, continuity of the delay operator is shown for networks whose dynamics are described by the LWR-Lax model \citep{BH, FHNMY}, which is a variation of the LWR model that does not capture vehicle spillback. Their result also relies on the {\it a priori} boundedness of path departure rates. \cite{existence} consider Vickrey's point queue model \citep{Vickrey} and show the continuity of the corresponding path delay operator for general networks without invoking the boundedness on the path departure rates.

All of these aforementioned results are established for network performance models that do not capture vehicle spillback. To the best of our knowledge, there has not been any rigorous proof of the continuity result for DNL models that allow queue spillback to be explicitly captured. On the contrary, some existing studies even show that the path travel times may depend discontinuously on the path departure rates, when physical queue models are used. For example,  \cite{Szeto2003} uses the cell transmission model and signal control to show that the path travel time may depend on the path departure rates in a discontinuous way. Such a finding suggests that the continuity of the delay operator could very well fail when spillback is present. This has been the major hurdle in showing the continuity or identifying relevant conditions under which the continuity is guaranteed. This paper bridges this gap by articulating these conditions and providing accompanying proof of continuity.

This paper presents, for the first time, a rigorous continuity result for the path delay operator based on the LWR network model, which explicitly captures physical queues and vehicle spillback. In showing the desired continuity, we propose a systematic approach for analyzing the well-posedness of two specific junction models \footnote{Well-posedness of a model refers to the property that the behavior of the solution hardly changes when there is a slight change in the initial/boundary conditions.}: a merge and a diverge model, both originally presented by \cite{CTM2}. The underpinning analytical framework employs the {\it wave-front tracking} methodology \citep{Dafermos, HR2002} and the technique of {\it generalized tangent vectors} \citep{Bressan1993, BCP}. A major portion of our proof involves the analysis of the interactions between kinematic waves and the junctions, which is frequently invoked for the study of well-posedness of junction models; see \cite{GP} for more details. Such analysis is further complicated by the fact that vehicle turning ratios at a diverge junction are determined endogenously by drivers' route choices within the DNL procedure. As a result, special tools are developed in this paper to handle this unique situation.

As we shall later see, a crucial step of the process above is to estimate and bound from below the minimum network supply, which is defined in terms of local vehicle densities in the same way as in \cite{Lebacque and Khoshyaran 1999}. In fact, if the supply somewhere tends to zero (that is, when traffic approaches the jam density), the well-posedness of the diverge junction may fail, as we demonstrate in Section \ref{subsubsecexample}. This has also been confirmed by the earlier study of \cite{Szeto2003}, where a wave of jam density is triggered by a signal red light and causes spillback at the upstream junction, leading to a jump in the path travel times. Remarkably, in this paper we are able to show that (1) if the supply is bounded away from zero (that is, traffic is bounded away from the jam density), then the diverge junction model is well posed; and (2) the desired boundedness of the supply is a natural consequence of the dynamic network loading procedure that involves only  the merge and diverge junction models we study here. This is a highly non-trivial result  because it not only plays a role in the continuity proof, but also implies that gridlock can never occur in the network loading procedure in any finite time horizon.

The final continuity result is presented in Section \ref{subseccontinuity}, following a number of preliminary results set out in previous sections. Although our continuity result is established only for networks consisting of simple merge and diverge nodes, it can be extended to networks with more complex topologies using the procedure of decomposing any junction into several simple merge and diverge nodes \citep{CTM2}. Moreover, the analytical framework employed by this paper can be invoked to treat other and more general junction topologies and/or merging and diverging rules, and the techniques employed to analyze wave interactions will remain valid.

The main contributions made in this paper include:

\begin{itemize}
\item formulation of the LWR-based dynamic network loading (DNL) model with spillback as a system of {\it partial differential algebraic equations} (PDAEs);
\item a continuity result for the path delay operator based on the aforementioned DNL model;
\item a novel method for estimating the network supply, which shows that gridlock can never occur within a finite time horizon.
\end{itemize}

The rest of this paper is organized as follows. Section \ref{secLWRDNL} recaps some essential knowledge and notions regarding the LWR network model and the DNL procedure. Section \ref{secPDAE} articulates the mathematical contents of the DNL model by formulating it as a PDAE system. Section \ref{subsecwellposed} introduces the merge and diverge models and establishes their well-posedness. Section \ref{secContinuityproof} provides a final proof of continuity and some discussions. Finally, we offer some concluding remarks in Section \ref{secconclusion}.

\section{LWR-based dynamic network loading}\label{secLWRDNL}

\subsection{Delay operator and dynamic network loading}

Throughout this paper, the time interval of analysis is a single commuting period expressed as $[0,\,T]$ for some $T>0$. We let $\mathcal{P}$ be the set of all paths employed by travelers. For each path $p\in\mathcal{P}$ we define the path departure rate which is a function of departure time $t\in[0,\,T]$:
$$
h_p(\cdot):~[0,\,T]~\rightarrow~\mathbb{R}_+
$$ 
where $\mathbb{R}_+$ denotes the set of non-negative real numbers. Each path departure rate $h_p(t)$ is interpreted as a time-varying path flow measured at the entrance of the first arc of the relevant path, and the unit for $h_p(t)$ is {\it vehicles per unit time}.  We next define $h(\cdot)=\{h_p(\cdot): p\in\mathcal{P}\}$ to be a vector of departure rates. Therefore, $h=h(\cdot)$ can be viewed as a vector-valued function of $t$, the departure time \footnote{For notation convenience and without causing any confusion, we will sometimes use $h$ instead of $h(\cdot)$ to denote path flow vectors.}.

The single most crucial ingredient t is the path delay operator,  which maps a given vector of departure rates $h$ to a vector of path travel times. More specifically, we let
\begin{equation*}
D_{p}(t,\,h)\qquad \forall t\in[t_0,\,t_f],\quad  \forall p\in \mathcal{P}
\end{equation*}
\noindent be the path travel time of a driver departing at time $t$ and following path $p$, given the departure rates $h$ associated with all the paths in the network. We then define the path delay operator $D(\cdot)$ by letting $D(h)=\{D_p(\cdot,\,h):\,p\in\mathcal{P}\}$, which is a vector consisting of time-dependent path travel times $D_p(t,\,h)$.

\subsection{The Lighthill-Whitham-Richards model on networks}\label{secpreliminaries}

We recap the network extension of the LWR model \citep{LW, Richards}, which captures the formation, propagation, and dissipation of spatial queues and vehicle spillback. Discussion provided below relies on general assumptions on the fundamental diagram and the junction model, and involves no {\it ad hoc} treatment of flow propagation, flow conservation, link delay, or other critical model features.

We consider a road link expressed as a spatial interval $[a,\,b]\subset \mathbb{R}$. The {\it partial differential equation} (PDE) representation of the LWR model is the following scalar conservation law
\begin{equation}\label{chapDNL:eqn27}
\partial_t\rho(t,\,x)+\partial_x f\big(\rho(t,\,x)\big)~=~0\qquad (t,\,x)\in[0,\,T]\times[a,\,b]
\end{equation}
\noindent with appropriate initial and boundary conditions, which will be discussed in detail later. Here, $\rho(t,\,x)$ denotes vehicle density at a given point in the space-time domain. The fundamental diagram $f(\cdot): [0,\,\rho^{jam}]\rightarrow [0,\, C]$ expresses vehicle flow at $(t,\,x)$ as a function of $\rho(t,\,x)$, where $\rho^{jam}$ denotes the jam density, and $C$ denotes the flow capacity.  Throughout this paper, we impose the following mild assumption on $f(\cdot)$: 

\vspace{0.1 in}
\noindent {\bf (F).} The fundamental diagram $f(\cdot)$ is continuous, concave and vanishes at $\rho=0$ and $\rho=\rho^{jam}$. 
\vspace{0.1 in}

An essential component of the network extension of the LWR model is the specification of  boundary conditions at a road junction. Derivation of the boundary conditions should not only obey physical realism, such as that enforced by entropy conditions \citep{GP, HR}, but also reflect certain behavioral and operational considerations, such as vehicle turning preferences \citep{CTM2}, driving priorities \citep{CGP}, and signal controls \citep{HGPFY}. Articulation of a junction model is facilitated by the notion of {\it Riemann Problem}, which is an initial value problem at the junction with constant initial condition on each incident link.  There exist a number of junction models that yield different solutions of the same Riemann Problem. In one line of research, an entropy condition is defined based on a minimization/maximization problem \citep{HR}. In another line of research, the boundary conditions are defined using link {\it demand} and {\it supply} \citep{Lebacque and Khoshyaran 1999}, which represent the link's sending and receiving capacities. Models following this approach include \cite{CTM2, Jin and Zhang 2003} and \cite{Jin 2010}. The solution of a Riemann Problem is given by the {\it Riemann Solver} (RS), to be defined below.

\subsubsection{The Riemann Solver}

We consider a general road junction $J$ with $m$ incoming roads and $n$ outgoing roads, as shown in Figure \ref{chapDNL:figlwrjunction}. 
\begin{figure}[h!]
\centering
\includegraphics[width=.45\textwidth]{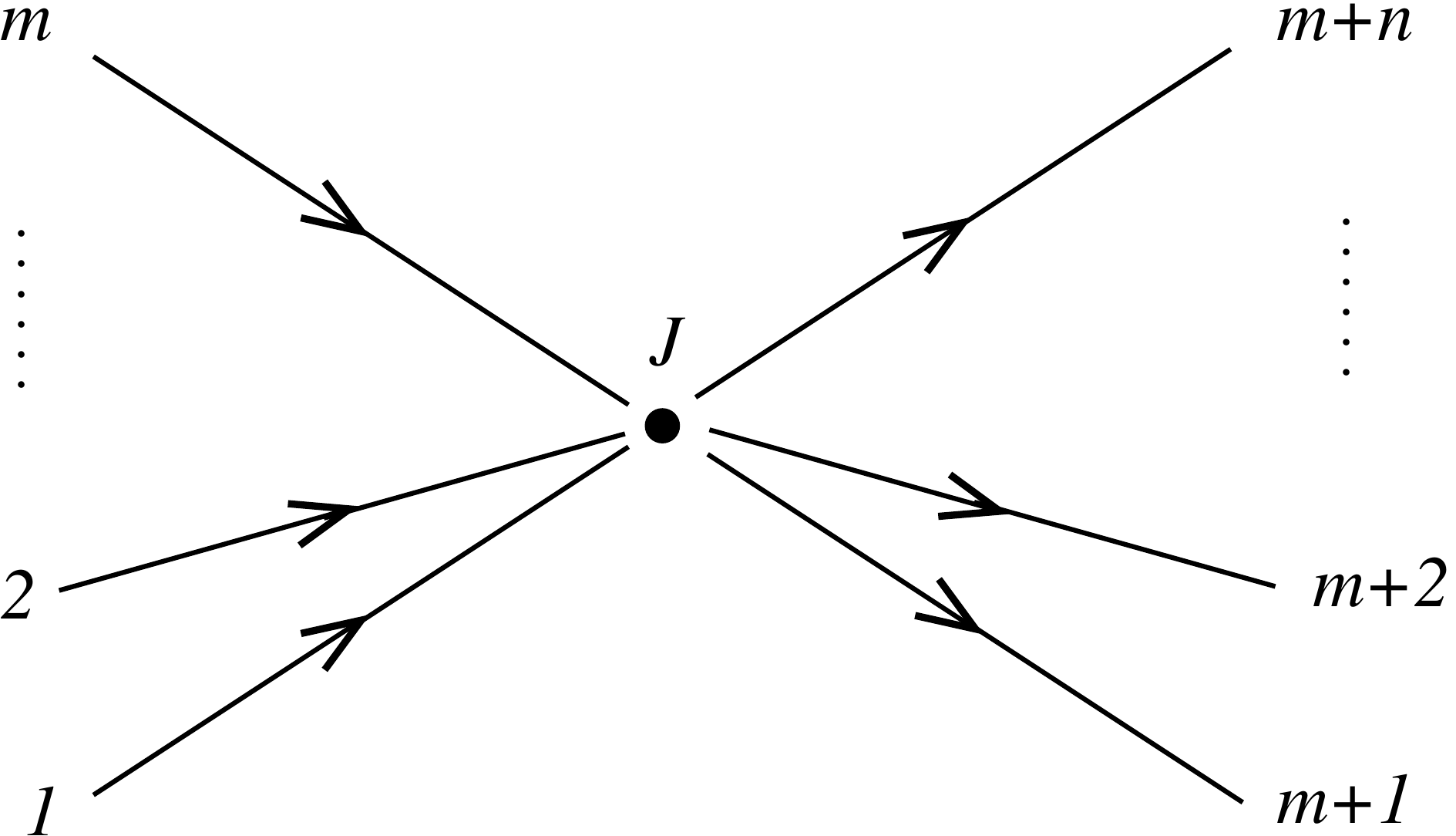}
\caption{A road junction with $m$ incoming links and $n$ outgoing links.}
\label{chapDNL:figlwrjunction}
\end{figure}

\noindent We denote by $I_1,\ldots, I_m$ the incoming links and by $I_{m+1},\ldots, I_{m+n}$ the outgoing links. In addition, for every $i\in\{1,\ldots, m+n\}$, the dynamic on $I_i$ is governed by the LWR model
\begin{equation}\label{chapDNL:eqn28}
\partial_t \rho_i(t,\,x)+\partial_x f_i\big(\rho_i(t,\,x)\big)~=~0 \qquad (t,\,x)\in[0,\,T]\times [a_i,\,b_i]
\end{equation}
\noindent where link $I_i$ is expressed as the spatial interval $[a_i,\,b_i$], and we always use the subscript `$i$' to indicate dependence on the link $I_i$. The initial condition for this conservation law is
\begin{equation}\label{chapDNL:eqn29}
\rho_i(0,\,x)~=~\hat\rho_{i}(x)\qquad  x\in[a_i,\,b_i]
\end{equation}

Notice that the above $(m+n)$ initial value problems are coupled together via the boundary conditions to be specified at the junction. Such a system of coupling equations is commonly analyzed using the Riemann Problem.

\begin{definition}\label{chapDNL:rpdef}{\bf (Riemann Problem)} The Riemann Problem at the junction $J$ is defined to be an initial value problem on a network consisting of the single junction $J$ with $m$ incoming links and $n$ outgoing links, all extending to infinity, such that the initial densities are constants on each link:
$$
\begin{cases}
\rho_i(0,\,x)~\equiv~\hat\rho_i \qquad & x\in(-\infty,\,b_i],\qquad \qquad i\in\{1,\,\ldots,\,m\} \\
\rho_j(0,\,x)~\equiv~\hat\rho_j\qquad & x\in [a_j,\,+\infty),  \qquad\qquad j\in\{m+1,\,\ldots,\,m+n\}
\end{cases}
$$
\noindent where $\hat\rho_k\in[0,\,\rho^{jam}_k]$ are constants, $k=1,\ldots, m+n$.
\end{definition}

A Riemann Solver for the junction $J$ is a mapping that, given any $(m+n)$-tuple of Riemann initial conditions $\big(\hat\rho_{1},\ldots, \hat\rho_{m+n}\big)$, provides a unique $(m+n)$-tuple of boundary conditions $\big(\overline\rho_1,\ldots, \overline\rho_{m+n}\big)$ such that one can solve the initial-boundary value problem for each link, and the resulting solutions constitute a weak entropy solution of the Riemann Problem at the junction \footnote{We refer the reader to \cite{HR} for a definition of weak entropy solution at a junction}. A precise definition of the Riemann Solver is given as follows.

\begin{definition}\label{chapDNL:lwrrsdef} {\bf (Riemann Solver)} A Riemann Solver for the junction $J$ with $m$ incoming links and $n$ outgoing links is a mapping
\begin{align*}
RS:~~\prod_{k=1}^{m+n} \left[0,\,\rho^{jam}_k\right] &~\longrightarrow~ \prod_{k=1}^{m+n}\left[0,\,\rho^{jam}_k\right]
\\
\big(\hat\rho_{1},\,\ldots,\, \hat\rho_{m+n}\big)&~\mapsto~\big(\overline\rho_1,\,\ldots,\,\overline\rho_{m+n}\big)
\end{align*}
which relates Riemann initial data $\hat\rho=\big(\hat\rho_{1},\ldots, \hat\rho_{m+n}\big)$ to boundary conditions $\overline\rho=\big(\overline\rho_1,\ldots, \overline\rho_{m+n}\big)$, such that the following hold.
\begin{enumerate}
\item[(i)] The solution of the Riemann Problem restricted on each link $I_k$ is given by the solution of the initial-boundary value problem with initial condition $\hat\rho_{k}$ and boundary condition $\overline\rho_k$, $k=1,\ldots, m+n$.

\item[(ii)] The Rankine-Hugoniot condition (flow conservation) holds:
\begin{equation}\label{flowconservation}
\sum_{i=1}^m f_i\big(\overline\rho_i\big)~=~\sum_{j=m+1}^{m+n}f_j\big(\overline\rho_j\big)
\end{equation}
\item[(iii)] The {\it consistency condition} holds:
\begin{equation}\label{rscond3}
RS\big[RS[\hat\rho]\big]~=~RS[\hat\rho]
\end{equation}
\end{enumerate}

\end{definition}

Three conditions must be satisfied by the Riemann Solver (RS). Item $(i)$ above requires that the boundary condition on each link must be properly given so that the initial value problems not only have well-defined solutions, but these solutions must also be compatible and form a sensible solution at the junction. \eqref{flowconservation} simply stipulates the conservation of flow across the junction. \eqref{rscond3} is a desirable property and is sometimes referred to as the {\it invariance property} \citep{Jin 2010}. 

\begin{remark}
For the same Riemann Problem, there exist many Riemann Solvers that satisfy conditions (i)-(iii) above. Despite their varying forms, most existing Riemann Solvers rely on a flow maximization problem at the relevant junction subject to constraints related to turning ratio, right-of-way, or signal controls; see \citep{CGP, HGPFY, HR, Jin and Zhang 2003} and \cite{Jin 2010}.
\end{remark}

\subsubsection{The link demand and supply}
For each link $I_i$, we let $\rho_i^c$ be the critical density at which the flow is maximized. The demand $D_i(t)$ for incoming links $I_i$ and the supply $S_j(t)$ for outgoing links $I_j$ are defined in terms of the density near the exit and entrance of the link, respectively  \citep{Lebacque and Khoshyaran 1999}:
\begin{align}
\label{demanddef}
D_i(t)~=~D_i\Big(\rho_i(t,\,b_i-)\Big)~=~&\begin{cases}
C_i \qquad& \hbox{if}~~\rho_i(t,\, b_i-)~\geq~\rho^c_i
\\
f_i\big(\rho_i(t,\,b_i-)\big)\qquad & \hbox{if}~~\rho_i(t,\, b_i-)~<~\rho^c_i
\end{cases}
\\
\label{supplydef}
S_j(t)~=~S_j\Big(\rho_j(t,\,a_j+)\Big)~=~&\begin{cases}
C_j \qquad& \hbox{if}~~\rho_j(t,\, a_j+)~\leq~\rho^c_j
\\
f_j\big(\rho_j(t,\,a_j+)\big)\qquad & \hbox{if}~~\rho_j(t,\, a_j+)~>~\rho^c_j
\end{cases}
\end{align}
\noindent In prose, the demand represents the maximum flow at which cars can be discharged from the incoming link; and the supply represents the maximum flow at which cars can enter the outgoing link.  Notice that the demand and supply are both expressed as functions of density, and they are always greater than or equal to the fundamental diagram $f_i(\cdot)$ or $f_j(\cdot)$; see Figure \ref{figDS} for an illustration. In our subsequent presentation, without causing confusion we will use notations $D_i(t)$ and $D_i(\rho)$ interchangeably where the former indicates the demand as a time-varying quantity,  and the latter emphasizes demand as a function of density. The same applies to the supply.

\begin{figure}[h!]
\centering
\includegraphics[width=0.55\textwidth]{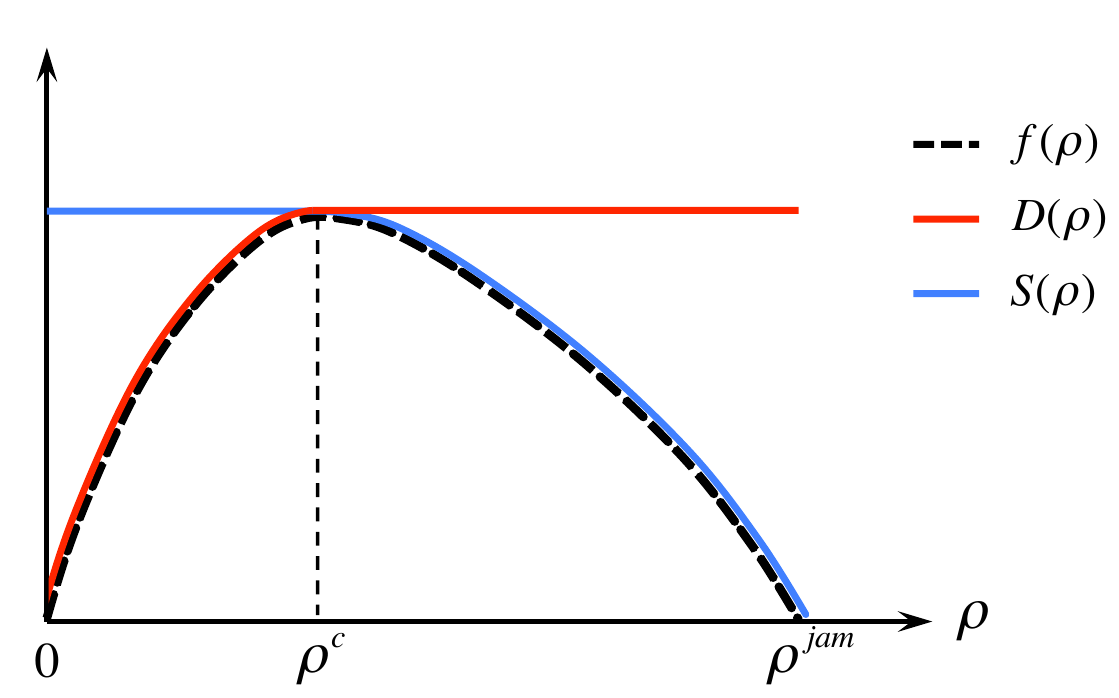}
\caption{Demand and supply as functions of density.}
\label{figDS}
\end{figure}

\section{Dynamic network loading problem formulated as a PDAE system}\label{secPDAE}

The aim of this section is to formulate the LWR-based dynamic network loading (DNL) problem as a system of partial differential algebraic equations (PDAEs). The proposed PDAE system uses vehicle density and queues as the primary unknown variables, and computes link dynamics, flow propagation, and path delay for any given vector of departure rates. The PDAE system captures vehicle spillback explicitly, and accommodates a wide range of junction types and Riemann Solvers.

We consider a network $G(\mathcal{A},\,\mathcal{V})$ expressed as a directed graph with $\mathcal{A}$ being the set of links and $\mathcal{V}$ being the set of nodes. Let $\mathcal{P}$ be the set of paths employed by travelers, and $\mathcal{W}$ be the set of origin-destination pairs. Each path $p\in\mathcal{P}$ is expressed as an ordered set of links it traverses:
$$
p~=~\{I_1,\,I_2,\,\ldots, I_{m(p)}\}
$$
\noindent where $m(p)$ is the number of links in this path. There are several crucial components of a complete network loading model, each of which is elaborated in a subsection below. Throughout the rest of this paper, for each node $v\in\mathcal{V}$, we denote by $\mathcal{I}^{v}$ the set of incoming links and $\mathcal{O}^{v}$ the set of outgoing links.

\subsection{Within-link dynamics}

For each $I_i\in\mathcal{A}$, the density dynamic is governed by the scalar conservation law
\begin{equation}\label{chapDNL:eqn34}
\partial_t\rho_i(t,\,x)+\partial_x \left[\rho_i(t,\,x)\cdot v_i\big(\rho_i(t,\,x)\big)\right]~=~0\qquad (t,\,x)\in[0,\,T]\times [a_i,\,b_i]
\end{equation}
subject to initial condition and boundary conditions to be determined in Section \ref{subsecPDAEbdycond}. The fundamental diagram $f_i(\rho_i\big)=\rho_i\cdot v_i\big(\rho_i\big)$ satisfies condition {\bf (F)} stated at the beginning of Section \ref{secpreliminaries}. In order to explicitly incorporate drivers' route choices, for every $p\in\mathcal{P}$ such that $I_i\in p$ we introduce the function $\mu_i^p(t,\,x)$, $(t,\,x)\in[0,\,T]\times [a_i,\,b_i]$, which represents, in every unit of flow $f_i(\rho_i(t,\,x))$, the fraction associated with path $p$. We call these variables {\it path disaggregation variables} (PDV). For each car moving along the link $I_i$, its surrounding traffic can be distinguished by path (e.g. 20\% following path $p_1$, 30\% following path $p_2$, 50\% following path $p_3$). As this car moves, such a composition will not change since its surrounding traffic all move at the same speed under the first-in-first-out (FIFO) principle (i.e. no overtaking is allowed). In mathematical terms, this means that the path disaggregation variables, $\mu_i^p(\cdot,\,\cdot)$, are constants along the trajectories of cars $(t,\,x(t))$ in the space-time diagram, where $x(\cdot)$ is the trajectory of a moving car on link $I_i$. That is,
$$
{d\over dt} \mu_i^p\big(t,\,x(t)\big)~=~0 \qquad \forall p~~\hbox{such that}~~ I_i\in p,
$$
\noindent which, according to the chain rule, becomes
$$
\partial_t\mu_i^p\big(t,\,x(t)\big)+\partial_x\mu_i^p(t,\,x)\cdot {d\over dt}x(t)~=~0,
$$
\noindent which further leads to another set of partial differential equations on link $I_i$:
\begin{equation}\label{chapDNL:eqn35}
\partial_t\mu_i^p(t,\,x)+v_i\big(\rho_i(t,\,x)\big)\cdot\partial_x\mu_i^p(t,\,x) ~=~0 \qquad  \forall p~~\hbox{such that}~~ I_i\in p
\end{equation}
\noindent Here, $\rho_i(t,\,x)$ is the solution of \eqref{chapDNL:eqn34}. The following obvious identity holds
\begin{equation}\label{idenmu}
\sum_{p\ni I_i} \mu_i^p(t,\,x) ~=~1\qquad\hbox{whenever }~~ \rho_i(t,\,x)~>~0
\end{equation}
where $p\ni I_i$ means ``path $p$ contains (or traverses) link $I_i$", and the summation appearing in \eqref{idenmu} is with respect to all such $p$. By convention, if $\rho_i(t,\,x)=0$, then $\mu_i^p(t,\,x)=0$ for  all $p\ni I_i$.

\subsection{Boundary conditions at an ordinary node}\label{subsecPDAEbdycond}
For reason that will become clear later, we introduce the concept of an {\it ordinary node}. An ordinary node is neither the origin nor the destination of any trip. We use the notation $\mathcal{V}^o$ to represent the set of ordinary nodes in the network.

As mentioned earlier, the partial differential equations on links incident to $J$ are all coupled together through a given junction model, i.e., a Riemann Solver.  A common prerequisite for applying the Riemann Solver is the determination of the flow distribution (turning ratio) matrix \citep{CGP}, which relies on knowledge of the PDVs $\mu_i^p(t,\,b_i)$ for all $I_i\in\mathcal{I}^{J}$.  We define the time-dependent flow distribution matrix associated with $J$:
\begin{equation}\label{Adef}
A^J(t)~=~\big\{\alpha^J_{ij}(t)\big\}\in [0,\,1]^{|\mathcal{I}^J| +  |\mathcal{O}^J|}
\end{equation}
\noindent where by convention, we use subscript $i$ to indicate incoming links, and $j$ to indicate outgoing links. Each element $\alpha_{ij}^J(t)$ represents the turning ratios of cars discharged from $I_i$ that enter downstream link $I_j$. Then, for all $p$ that traverses $J$, the following holds.
\begin{equation}\label{chapDNL:determinealpha}
 \alpha^J_{ij}(t)=\sum_{p\ni I_i,\,I_j} \mu_{i}^p(t,\,b_i) \qquad\qquad  \forall ~I_i\in\mathcal{I}^J,~I_j\in\mathcal{O}^J
\end{equation}
\noindent It can be easily verified that $\alpha_{ij}^J(t)\in[0,\,1]$ and $\sum_{j}\alpha_{ij}^J(t)\equiv 1$ according to \eqref{idenmu}.

We are now ready to express the boundary conditions for the ordinary junction $J\in\mathcal{V}^o$. Let 
$$
RS^{A^J}: \prod_{k=1}^{|\mathcal{I}^J|+|\mathcal{O}^J|} [0,\,\rho^{jam}_k] \rightarrow \prod_{k=1}^{|\mathcal{I}^J|+|\mathcal{O}^J|}[0,\,\rho^{jam}_k]
$$ 
\noindent be a given Riemann Solver. Notice that  the dependence of the Riemann Solver on $A^J$ has been indicated with a superscript. The boundary conditions for PDEs \eqref{chapDNL:eqn34} read
\begin{align}\label{PDAERSdef1}
\rho_{k}(t,\,b_k)&~=~RS^{A^J}_k\left[\big(\rho_{i}(t,\, b_i-)\big)_{I_i\in\mathcal{I}^J}~,~\big(\rho_{j}(t,\,a_j+)\big)_{I_j\in\mathcal{O}^J}\right]\qquad  \forall I_k \in \mathcal{I}^J
\\
\label{PDAERSdef2}
\rho_{l}(t,\,a_l)&~=~RS^{A^J}_l\left[\big(\rho_{i}(t,\, b_i-)\big)_{I_i\in\mathcal{I}^J}~,~\big(\rho_{j}(t,\,a_j+)\big)_{I_j\in\mathcal{O}^J}\right]\qquad \forall I_l\in \mathcal{O}^J
\end{align}
where $RS^{A^J}_k[\cdot]$ denotes the $k$-th component of the mapping, $k=1,\,\ldots,\,|\mathcal{I}^J|+|\mathcal{O}^J|$.  
\begin{remark}
Intuitively, \eqref{PDAERSdef1}-\eqref{PDAERSdef2} mean that, given the current traffic states $\big(\rho_{i}(t,\, b_i-)\big)_{I_i\in\mathcal{I}^J}$ and $\big(\rho_{j}(t,\,a_j+)\big)_{I_j\in\mathcal{O}^J}$ adjacent to the junction $J$, the Riemann Solver $RS^{A^J}$ specifies, for each incident link $I_k$ or $I_l$, the corresponding boundary conditions $\rho_k(t,\,b_k)$ or $\rho_l(t,\,a_l)$. In prose, at each time instance the Riemann Solver inspects the traffic conditions near the junction, and proposes the discharging (receiving) flows of its incoming (outgoing) links. Such a process is based on the flow distribution matrix $A^{J}$ and often reflects traffic control measures at junctions. Furthermore, the Riemann Solver operates with knowledge of every link incident to the junction, thus the boundary condition of any relevant link is determined jointly by all the links connected to the same junction. Therefore, the LWR equations on all the links are coupled together through this mechanism. For this reason the LWR-based DNL models are highly challenging, both theoretically and computationally.
\end{remark}

\noindent The upstream boundary conditions associated with PDEs \eqref{chapDNL:eqn35} are:
\begin{equation}\label{chapDNL:mubdycond}
\mu_{j}^{p}(t,\, a_j)~=~{f_{i}\big(\rho_{i}(t,\, b_i)\big)\cdot \mu_{i}^p(t,\, b_i) \over f_{j}\big(\rho_{j}(t,\,a_j)\big)}  \qquad \forall p~~\hbox{such that}~~\{I_i,\,I_j\}\subset p, \qquad  \forall I_j\in \mathcal{O}^J
\end{equation}
\noindent where the numerator $f_{i}\big(\rho_{i}(t,\, b_i)\big)\cdot \mu_{i}^p(t,\, b_i)$ expresses the exit flow on link $I_i$ associated with path $p$, which, by flow conservation, is equal to the entering flow of link $I_j$ associated with the same path $p$; the denominator represents the total entering flow of link $I_j$. 

\begin{remark}
Unlike the density-based PDE, the PDV-based PDE does not have any downstream boundary condition due to the fact that the traveling speeds of the PDVs are the same as the car speeds (they can be interpreted as Lagrangian labels that travel with the cars); thus information regarding the PDVs does not propagate backwards or spills over to upstream links. 
\end{remark}

\subsection{Flow distribution at origin or destination nodes}\label{subsecvirtuallink}
We consider a node $v\in\mathcal{V}$ that is either the origin or the destination of some path $p$. One immediate observation is that the flow conservation constraint \eqref{flowconservation} no longer holds at such a node since vehicles  either are `generated' (if $v$ is an origin) or `vanish' (if $v$ is a destination). A simple and effective way to circumvent this issue is to introduce a {\it virtual link}. A virtual link is an imaginary road with certain length and fundamental diagram, and serves as a buffer between an ordinary node and an origin/destination; see Figure \ref{chapDNL:figvirtuallink} for an illustration. By introducing virtual links to the original network, we obtain an augmented network $G(\mathcal{\tilde A},\,\mathcal{\tilde V})$ in which all road junctions are ordinary, and hence fall within the scope of the previous section.

\begin{figure}[h]
\centering
\includegraphics[width=.65\textwidth]{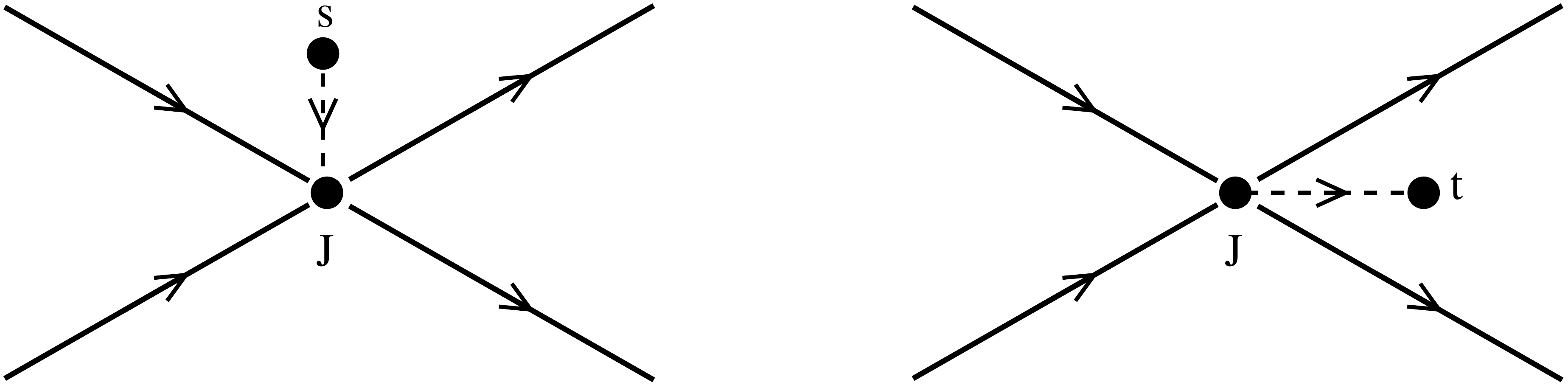}
\caption{Illustration of the virtual links. Left: a virtual link connecting an origin (s) to an ordinary junction $J$. Right: a virtual link connecting a destination (t) to an ordinary junction $J$.}
\label{chapDNL:figvirtuallink}
\end{figure}

Let us denote by $\mathcal{S}$ the set of origins in the augmented network $G(\mathcal{\tilde A},\,\mathcal{\tilde V})$. For any $s\in\mathcal{S}$, we denote by $\mathcal{P}^s\subset \mathcal{P}$ the set of paths that originate from $s$, and by $I_s$  the virtual link incident to this origin.  For each $p\in\mathcal{P}^s$ we denote by $h_p(t)$ the departure rate (path flow) along $p$. It is expected that a buffer (point) queue may form at $s$ in case the receiving capacity of the downstream $I_s$ is insufficient to accommodate all the departure rates $\sum_{p\in\mathcal{P}^s}h_p(t)$.  For this buffer queue, denoted $q_s(t)$, we employ a Vickrey-type dynamic \citep{Vickrey}; that is, 
\begin{equation}\label{chapDNL:vlpqdynamic}
{d\over dt} q_{s}(t)~=~\sum_{p\in \mathcal{P}^s}h_p(t)
-
\begin{cases}
S_{s}(t) \qquad & \hbox{if}~~ q_{s}(t)~>~0
\\
\displaystyle\min\Big\{\sum_{p\in \mathcal{P}^s}h_p(t),~ S_{s}(t) \Big\}       \qquad & \hbox{if}~~ q_{s}(t)~=~0
\end{cases}
\end{equation}
where $S_s(t)$ denotes the supply of the virtual link $I_s$. The only difference between \eqref{chapDNL:vlpqdynamic} and Vickrey's model is the time-varying downstream receiving capacity provided by the virtual link. \footnote{The right hand side of the ordinary differential equation \eqref{chapDNL:vlpqdynamic} is discontinuous. An analytical treatment of this irregular equation is provided by \cite{GVM1, GVM2} using the variational formulation.}

It remains to determine the dynamics for the path disaggregation variables (PDV). More precisely, we need to determine $\mu_s^p(t,\,a_s)$ for  the virtual link $I_s$ where $p\in\mathcal{P}^s$, and $x=a_s$ is the upstream boundary of $I_s$. This will be achieved using the Vickrey-type dynamic \eqref{chapDNL:vlpqdynamic} and the FIFO principle. Specifically, we define the queue exit time function $\lambda_s(t)$ where $t$ denotes the time at which drivers depart and join the point queue, if any; $\lambda_s(t)$ expresses the time at which the same group of drivers exit the point queue. Clearly, FIFO dictates that
\begin{equation}\label{lambdaqueuedef}
\int_{0}^t \sum_{p\in\mathcal{P}^s}h_p(\tau)\,d\tau~=~\int_{0}^{\lambda_s(t)} f_s\big(\rho_s(\tau,\,a_s)\big)\,d\tau 
\end{equation}
\noindent where the two integrands on the left and right hand sides of the equation are flow entering the queue and flow leaving the queue, respectively. We may determine the path disaggregation variables as:
\begin{equation}\label{muforsource}
\mu_s^p\big(\lambda_s(t),\,a_s\big)~=~ {h_p(t)\over \sum_{q\in\mathcal{P}^s}h_q(t)}\qquad\forall p\in\mathcal{P}_s
\end{equation}
\noindent Notice that, if $\sum_{q\in\mathcal{P}^s}h_q(t)=0$, then the flow leaving the point queue at time $\lambda_s(t)$ is also zero; thus there is no need to determine the path disaggregation variables. Therefore, the identity \eqref{muforsource} is well defined and meaningful.

\subsection{Calculation of path travel times}

With all preceding discussions, we may finally express the path travel times, which are the outputs of a complete DNL model. The path travel time consists of link travel times plus possible queuing time at the origin. Mathematically, the link exit time function $\lambda_i(t)$ for any $I_i$ is defined, in a way similar to \eqref{lambdaqueuedef}, as
\begin{equation}
\int_{0}^t f_i\big(\rho_i(\tau,\,a_i)\big) \, d\tau~=~\int_{0}^{\lambda_i(t)} f_i\big(\rho_i(\tau,\,b_i)\big)\,d\tau
\end{equation}

For a path expressed as $p=\{I_1,\,I_2,\,\ldots,\,I_{m(p)}\}$, the time to traverse it is calculated as
\begin{equation}
\lambda_s \circ \lambda_1 \circ \lambda_2 \ldots \circ\lambda_{m(p)} (t)
\end{equation}
\noindent where $f\circ g (t)\doteq g(f(t))$ means the composition of two functions. This is due to the assumption that cars leaving the previous link (or queue) immediately enter the next link without any delay.

\subsection{The PDAE system}\label{subsecPDAE}
We are now ready to present a generic PDAE system for the dynamic network loading procedure. Let us begin by summarizing some key notations.

\begin{framed}
\vspace{-0.15 in}
\begin{itemize}
    \item[]{\makebox[2.5cm]{$G(\mathcal{A},\,\mathcal{V})$\hfill}   the original network with link set $\mathcal{A}$ and node set $\mathcal{V}$;}
    
    \vspace{-0.06 in}
    
    \item[]{\makebox[2.5cm]{$\mathcal{V}\mathcal{L}$\hfill}    the set of virtual links;}
    
    \vspace{-0.06 in}
    
    \item[]{\makebox[2.5cm]{$G(\mathcal{\tilde A},\, \mathcal{\tilde V})$ \hfill} the augmented network including virtual links;}
    
    \vspace{-0.06 in}
    
    \item[]{\makebox[2.5cm]{$\mathcal{S}$  \hfill} the set of origins in $G(\mathcal{\tilde A},\, \mathcal{\tilde V})$;}
    
    \vspace{-0.06 in}
    
    \item[]{\makebox[2.5cm]{$\mathcal{P}^s$  \hfill} the set of paths originating from $s\in\mathcal{S}$; }
    
     \vspace{-0.06 in}
    
    \item[]{\makebox[2.5cm]{$\mathcal{V}^o$  \hfill} the set of ordinary junctions in $G(\mathcal{\tilde A},\, \mathcal{\tilde V})$; }
    
     \vspace{-0.06 in}
    
    \item[]{\makebox[2.5cm]{$\mathcal{I}^J$  \hfill} the set of incoming links of a junction $J\in\mathcal{V}^o$;}
    
     \vspace{-0.06 in}
    
    \item[]{\makebox[2.5cm]{$\mathcal{O}^J$  \hfill} the set of outgoing links of a junction $J\in\mathcal{V}^o$;}
    
    \vspace{-0.06 in}
    
    \item[]{\makebox[2.5cm]{$A^J(t)$  \hfill} the flow distribution matrix associated with junction $J$;}
    
    \vspace{-0.06 in}
    
    \item[]{\makebox[2.5cm]{$RS^{A^J}$  \hfill} the Riemann Solver for junction $J$, which depends on $A^J$.}

\end{itemize}
\vspace{-0.15 in}
\end{framed}

\noindent We also list some key variables of the PDAE system below.
\begin{framed}
\vspace{-0.15 in}
\begin{itemize}
    \item[]{\makebox[2.5cm]{$h_p(t)$\hfill}   the path departure rate along $p\in\mathcal{P}$;}
    
    \vspace{-0.06 in}
    
    \item[]{\makebox[2.5cm]{$\rho_i(t,\,x)$\hfill}  the vehicle density on link $I_i\in\mathcal{\tilde A}$;}
    
    \vspace{-0.06 in}
    
    \item[]{\makebox[2.5cm]{$\mu_i^p(t,\,x)$ \hfill} the proportion of flow on link $I_i$ associated with path $p$ (path disaggregation variable);}
    
    \vspace{-0.06 in}
    
    \item[]{\makebox[2.5cm]{$q_s(t)$  \hfill}  the point queue at the origin $s\in\mathcal{S}$; }

      \vspace{-0.06 in}
    
    \item[]{\makebox[2.5cm]{$\lambda_s(t)$  \hfill}  the point queue exit time function at origin $s\in\mathcal{S}$.}

\end{itemize}
\vspace{-0.15 in}
\end{framed}

\noindent Given any vector of path departure rates $h=\big(h_p(\cdot): p\in\mathcal{P})$, the proposed PDAE system for calculating path travel times is summarized as follows.

\begin{align}
\label{chapDNL:LWRPDAEeqn1}
{dq_{s}(t)\over dt} =\sum_{p\in \mathcal{P}^s}h_p(t)
-
\begin{cases}
S_s(t) \qquad & q_{s}(t)>0
\\
\displaystyle\min\Big\{\sum_{p\in \mathcal{P}^s}h_p(t),~ S_s(t) \Big\}       \quad  & q_{s}(t)=0
\end{cases} \qquad  & \forall s\in\mathcal{S}
\\
\label{chapDNL:LWRPDAEeqn15}
\int_{0}^t \sum_{p\in\mathcal{P}^s}h_p(\tau)\,d\tau~=~\int_{0}^{\lambda_s(t)} f_s\big(\rho_s(\tau,\,a_s)\big)\,d\tau   \qquad &  \forall s\in\mathcal{S}
\\
\label{chapDNL:LWRPDAEeqn16}
\int_{0}^t f_i\big(\rho_i(\tau,\,a_i)\big) \, d\tau~=~\int_{0}^{\lambda_i(t)} f_i\big(\rho_i(\tau,\,b_i)\big)\,d\tau \qquad & \forall I_i\in\mathcal{\tilde A}
\\
\label{chapDNL:LWRPDAEeqn2}
\partial_t\rho_i(t,\,x)+\partial_x \left[\rho_i(t,\,x)\cdot v_i\big(\rho_i(t,\,x)\big)\right]=0\qquad & (t,\,x)\in[0,\,T]\times [a_i,\,b_i]
\\
\label{chapDNL:LWRPDAEeqn3}
\partial_t\mu_i^p\big(t,\,x\big)+v_i\big(\rho_i(t,\,x)\big)\cdot \partial_x\mu_i^p\big(t,\,x\big)=0\qquad  & (t,\,x)\in[0,\,T]\times [a_i,\,b_i]
\\
\label{chapDNL:LWRPDAEeqn65}
\mu_s^p\big(\lambda_s(t),\,a_s\big)~=~ {h_p(t)\over \sum_{q\in\mathcal{P}^s}h_q(t)} \qquad & \forall s\in\mathcal{S},\, p\in\mathcal{P}_s
\\
\label{chapDNL:LWRPDAEeqn66}
\mu_{j}^{p}(t,\, a_j)~=~{f_{i}\big(\rho_{i}(t,\, b_i)\big)\cdot \mu_{i}^p(t,\, b_i) \over f_{j}\big(\rho_{j}(t,\,a_j)\big)}  \qquad &\forall p \supset \{I_i,\,I_j\}
\\
\label{chapDNL:LWRPDAEeqn4}
A^J(t)=\big\{\alpha^J_{ij}(t)\big\},\quad   \alpha^J_{ij}(t)=\sum_{p\ni I_i,\,I_j} \mu_{i}^p(t,\,b_i) \qquad & \forall I_i\in\mathcal{I}^J,\, I_j\in\mathcal{O}^J
\\
\label{chapDNL:LWRPDAEeqn5}
\rho_{k}(t,\,b_k)~=~RS^{A^J}_k\left[\big(\rho_{i}(t,\, b_i-)\big)_{I_i\in\mathcal{I}^J}~,~\big(\rho_{j}(t,\,a_j+)\big)_{I_j\in\mathcal{O}^J}\right]\qquad  & \forall I_k \in \mathcal{I}^J
\\
\label{chapDNL:LWRPDAEeqn6}
\rho_{l}(t,\,a_l)~=~RS^{A^J}_l\left[\big(\rho_{i}(t,\, b_i-)\big)_{I_i\in\mathcal{I}^J}~,~\big(\rho_{j}(t,\,a_j+)\big)_{I_j\in\mathcal{O}^J}\right]\qquad  & \forall I_l\in \mathcal{O}^J
\\
\label{chapDNL:LWRPDAEeqn7}
D_p(t,\,h)~=~\lambda_s \circ \lambda_1 \circ \lambda_2 \ldots \circ\lambda_{m(p)} (t) \qquad & \forall p\in\mathcal{P},\quad \forall t\in[0,\,T]
\end{align}

\noindent Eqn \eqref{chapDNL:LWRPDAEeqn1} describes the (potential) queuing process at each origin. Eqns \eqref{chapDNL:LWRPDAEeqn15} and \eqref{chapDNL:LWRPDAEeqn16} express the queue exit time function for a point queue, and the link exit time function for a link, respectively.  Eqns \eqref{chapDNL:LWRPDAEeqn2}-\eqref{chapDNL:LWRPDAEeqn3} express the link dynamics in terms of car density and PDV; Eqns \eqref{chapDNL:LWRPDAEeqn65}-\eqref{chapDNL:LWRPDAEeqn66} specifies the upstream boundary conditions for the PDV as these variables can only propagate forward in space. Eqns \eqref{chapDNL:LWRPDAEeqn4}-\eqref{chapDNL:LWRPDAEeqn6} determine the boundary conditions at junctions. Finally, Eqn \eqref{chapDNL:LWRPDAEeqn7} determines the path travel times.

 The above PDAE system involves partial differential operators $\partial_t$ and $\partial_x$. Solving such a system requires solution techniques from the theory of numerical {\it partial differential equations} (PDE) such as finite difference methods \citep{Godunov, LeVeque} and finite element methods \citep{LT2005}.

\section{Well-posedness of two junction models}\label{subsecwellposed}

In mathematical modeling, the term {\it well-posedness} refers to the property of having a unique solution, and the behavior of that solution hardly changes when there is a slight change in the initial/boundary conditions. Examples of well-posed problems include the initial value problem for scalar conservation laws \citep{Bbook}, and the initial value problem for the Hamilton-Jacobi equations \citep{VT2}. In the context of traffic network modeling, well-posedness is a desirable property of network performance models capable of supporting analyses and computations of DTA models. It is also closely related to the continuity of the path delay operator, which is the main focus of this paper.

This section investigates the well-posedness (i.e. continuous dependence on the initial/boundary conditions) of two specific junction models. These two junctions are depicted in Figure \ref{figtwojunc}, and the corresponding merge and diverge rules are proposed initially by \cite{CTM2} in a discrete-time setting with fixed vehicle turning ratios and driving priority parameters. 

\begin{figure}[h!]
\centering
\includegraphics[width=0.7\textwidth]{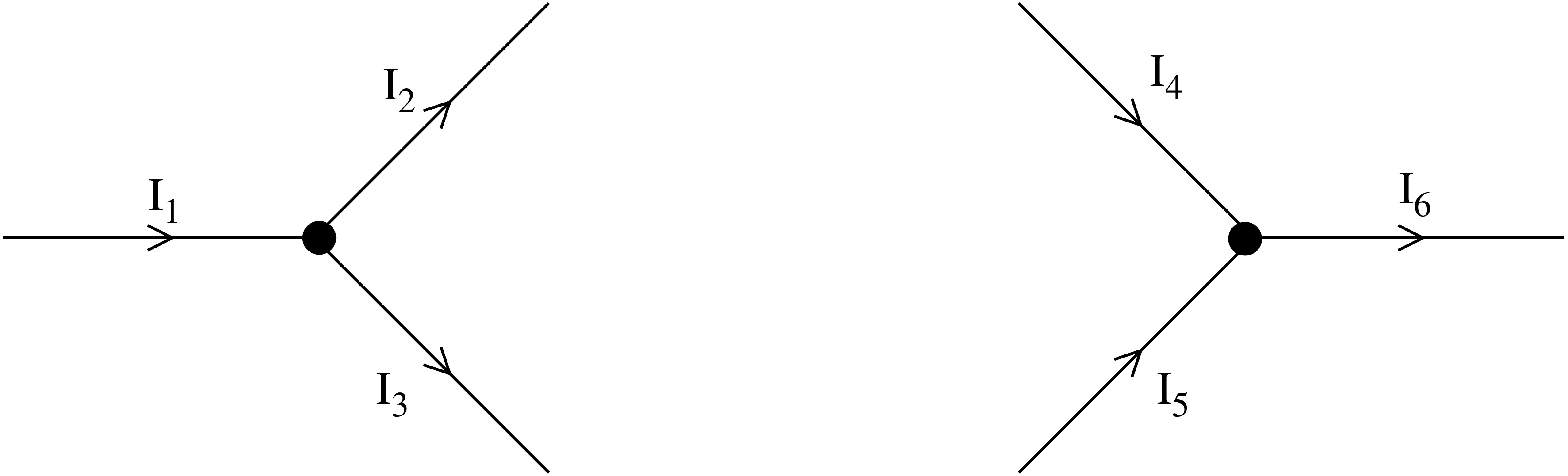}
\caption{The diverging (left) and merge (right) junctions.}
\label{figtwojunc}
\end{figure}

\subsection{The two junction models}\label{secmergediverge}

\subsubsection{The diverge model}\label{subsecdiverge}

We first consider the diverge junction shown on the left part of Figure \ref{figtwojunc}, with one incoming link $I_1$ and two outgoing links $I_2$ and $I_3$.  The demand $D_1(t)$  and the supplies, $S_2(t)$ and $S_3(t)$, are defined by \eqref{demanddef} and \eqref{supplydef} respectively. The Riemann Solver for this junction relies on the following two conditions.
\begin{itemize}
\item[(A1)] Cars leaving $I_1$ advance to $I_2$ and $I_3$ according to some turning ratio which is determined by the PDV $\mu_1(t,\,b_1)$ in the DNL model.
\item[(A2)] Subject to  (A1), the flow through the junction is maximized.  
\end{itemize}

In the original diverge model \citep{CTM2}, the vehicle turning ratios, denoted $\alpha_{1,2}$ and $\alpha_{1,3}$ with obvious meaning of notations, are constants known {\it a priori}. This is not the case in a dynamic network loading model since they are determined endogenously by drivers' route choices, as expressed mathematically by Eqn \eqref{chapDNL:LWRPDAEeqn4}. The diverge junction model, described by (A1) and (A2), can be more explicitly written as:
\begin{equation}\label{fifodiv}
\begin{array}{l}
f^{out}_1(t)~=~\displaystyle\min\left\{D_1(t),~{S_2(t)\over \alpha_{1,2}(t)},~{S_3(t)\over\alpha_{1,3}(t)}\right\}
\\
f^{in}_2(t)~=~\alpha_{1,2}(t) \cdot f^{out}_1(t)
\\
f^{in}_3(t)~=~\alpha_{1,3}(t)\cdot f^{out}_1(t)
\end{array}
\end{equation}
\noindent where $f^{out}_1(t)$ denotes the exit flow of link $I_1$, and $f^{in}_j(t)$ denotes the entering flow on link $I_j$, $j=2,\,3$.

\subsubsection{The merge model}\label{subsecmergemodel}

We now turn to the  merge junction in Figure \ref{figtwojunc}, with two incoming links $I_4$ and $I_5$ and one outgoing link $I_6$. In view of this merge junction, assumption (A1) becomes irrelevant as there is only one outgoing link; and assumption (A2) cannot determine a unique solution \footnote{More generally, as pointed out by \cite{CGP}, when the number of incoming links exceeds the number of outgoing links, (A1) and (A2) combined are not sufficient to ensure a unique solution.}. To address this issue, we consider a {\it right-of-way} parameter $p\in(0,\,1)$ and the following priority rule:\\

\noindent (R1) The actual link exit flows satisfy $(1-p) \cdot f^{out}_{4}(t)=p\cdot f^{out}_{5}(t)$. \\

\begin{figure}[h!]
\centering
\includegraphics[width=.9\textwidth]{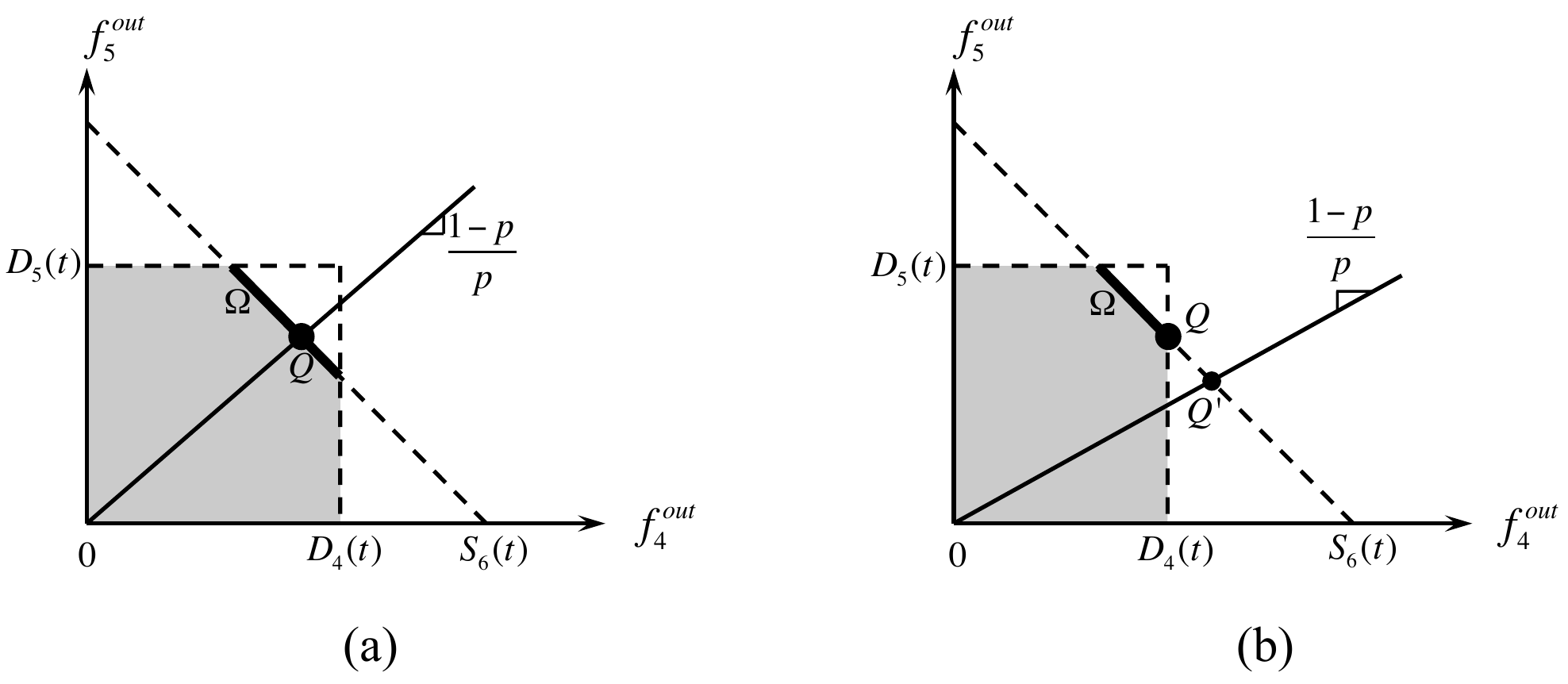}
\caption{Illustration of the merge model. The shaded areas represent the feasible domain of the maximization problem \eqref{DDS}, and the thick line segments ($\Omega$) are the optimal solution sets of \eqref{DDS}. Left: Rule (R1) is compatible with (A2); and there exists a unique point $Q$ satisfying both (A2) and (R1). Right: (R1) is incompatible with (A2); in this case, the model selects point $Q$ within the set $\Omega$ that is closest to the ray ($R$) from the origin with slope ${1-p\over p}$.}
\label{figmerge}
\end{figure}

\noindent Notice that (R1) may be incompatible with assumption (A2); and we refer the reader to Figure \ref{figmerge} for an illustration. Whenever there is a conflict between (R1) and (A2), we always respect (A2) and relax (R1) so that the solution is chosen to be the point that is closest to the line $y={1-p\over p} x$ among all the points yielding the maximum flow. Clearly, such a point is unique. Mathematically, we let $\Omega$ be the set of points $(f^{out}_{4},\,f^{out}_{5})$ that solves the following maximization problem:
\begin{equation}\label{DDS}
\left\{\begin{array}{l}
\max~ f^{out}_{4} + f^{out}_{5} 
\\
\hbox{such that }~~ 0 \leq f^{out}_{4} \leq D_4(t),~~ 0\leq f^{out}_{5}\leq D_5(t),~~ f^{out}_{4}+f^{out}_{5}~\leq~S_6(t)
\end{array}
\right.
\end{equation}

Moreover, we define the ray $R\doteq \left\{(f^{out}_{4},\,f^{out}_{5})\in\mathbb{R}_+^{2}:~(1-p)\cdot f^{out}_{4}=p\cdot f^{out}_{5}\right\}$. Then the solution of the merge model is defined to be the projection of $R$ onto $\Omega$; that is,
\begin{equation}\label{RSmergedef}
(f^{out, *}_{4},\, q^{out, *}_{5})~=~\underset{(f^{out}_{4},\,f^{out}_{5})\in\Omega} {\hbox{argmin}} d\left[(f^{out}_{4},\,f^{out}_{5}),\,R\right]
\end{equation}
\noindent where $d\left[(f^{out}_{4},\,f^{out}_{5}),\,R\right]$ denotes the Euclidean distance between the point $(f^{out}_{4}, f^{out}_{5})$ and the ray $R$:
$$
d\left[(f^{out}_{4},\,f^{out}_{5}),\,R\right]~=~\min_{(x,\,y)\in R}\left\| (f^{out}_{4},\,f^{out}_{5}) -  (x,\,y)\right\|_2
$$

\subsection{Well-posedness of the diverge junction model}
In this section, we investigate the well-posedness of the diverge junction model. Unlike previous studies \citep{GP, LKWM}, a major challenge in this case is to incorporate drivers' route choices, expressed by the {\it path disaggregation variable} $\mu$, into the model and the analysis. In effect, we need to establish the continuous dependence of the model on the initial/boundary conditions in terms of both $\rho$ and $\mu$. As we shall see in Section \ref{subsubsecexample} below, such a continuity does not hold in general. Following this, Section \ref{subsubsecsufficient} provides sufficient conditions for the continuity to hold. These sufficient conditions are crucial for the desired continuity of the delay operator.

\subsubsection{An example of ill-posedness}\label{subsubsecexample}

It has been shown by \cite{LKWM} that the diverge model (Section \ref{subsecdiverge}) with constant turning ratios, $\alpha_{1,2}$ and $\alpha_{1,3}$, is well-posed. However, the assumption of fixed and exogenous turning ratios does not hold in DNL models;  see \eqref{chapDNL:LWRPDAEeqn4}. As a result the well-posedness is no longer true, which is demonstrated by the following  counterexample.

We consider the diverge junction (Figure \ref{figtwojunc}) and assume the same fundamental diagram $f(\cdot)$ for all the links for simplicity. We consider a series of constant initial data parameterized by $\varepsilon$ on the three links $I_1,\, I_2$, and $I_3$:
\begin{equation}\label{spillcounter1}
\rho_1(0,\,x)~\equiv~\hat \rho_1\in(\rho^c,\,\rho^{jam}),\qquad\qquad \rho_2(0,\,x)~\equiv~\hat \rho_2^{\,\varepsilon}~\in (\rho^c,\,\rho^{jam}],\qquad\qquad \rho_3(0,\,x)~\equiv~\hat\rho_3^{\,\varepsilon}\in (0,\,\rho^c)
\end{equation}
\noindent where $\rho^c$ and $\rho^{jam}$ are the critical density and the jam density, respectively. $\hat \rho_1$, $\hat\rho_2^{\varepsilon}$, and $\hat\rho_3^{\varepsilon}$ satisfy:
\begin{equation}\label{spillcounter2}
f(\hat \rho_2^{\,\varepsilon})~=~\varepsilon f\left(\hat\rho_1\right),\qquad \qquad f (\hat\rho_3^{\,\varepsilon})~=~(1-\varepsilon)f\left(\hat\rho_1\right)
\end{equation}
\noindent where $\varepsilon\geq 0$ is a parameter. This configuration of initial data implies that link $I_1$ and  link $I_2$ are both in the congested phase, while link 3 is in the uncongested (free-flow) phase, see Figure \ref{figcounterex} for an illustration.
\begin{figure}[h!]
\centering
\includegraphics[width=\textwidth]{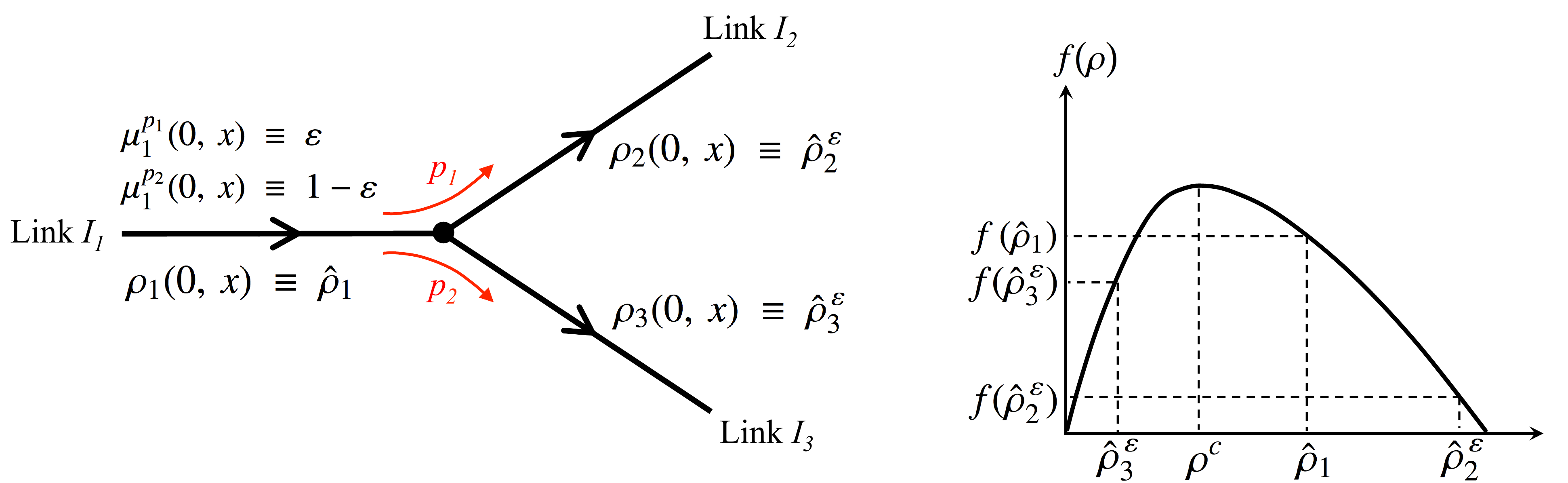}
\caption{An example of ill-posedness of the initial value problem with endogenous route choices. Left: junction topology. Right: illustration of the constant initial densities on the links.}
\label{figcounterex}
\end{figure}

Two paths exist in this example: $p_1=\{I_1,\,I_2\}$ and $p_2=\{I_1,\, I_3\}$. The initial conditions for $\mu_1^{p_1}$ and $\mu_1^{p_2}$, which arise from travelers' route choices, are as follows.
$$
\mu_{1}^{p_1}(0,\,x)~\equiv~\varepsilon, \qquad\qquad \mu_1^{p_2}(0,\,x)~\equiv~1-\varepsilon 
$$
\noindent Notice that these initial conditions are part of the initial value problem at the junction. The solution of this initial value problem depends on the value of $\varepsilon$; in particular, we have the following two cases. 

\begin{itemize}
\item  When $\varepsilon>0$,  we claim that the initial conditions $\hat\rho_1,\,\hat\rho_2^{\,\varepsilon}$ and $\hat\rho_3^{\,\varepsilon}$ satisfying \eqref{spillcounter1}-\eqref{spillcounter2}  constitute a constant solution at the junction. To see this, we follow the junction model \eqref{fifodiv} and the definitions of demand and supply \eqref{demanddef}-\eqref{supplydef} to get
 \begin{align}
   \label{fout1epsilon}
 f^{out}_1(t)&~=~\min\left\{D_1(t),\,{S_2(t)\over \alpha_{1,2}},\,{S_3(t)\over\alpha_{1,3}}\right\}=\min\left\{C,\,{f(\hat\rho_2^{\,\varepsilon})\over \varepsilon},\,{C\over 1-\varepsilon}\right\}~=~\min\left\{C,\,f(\hat\rho_1),\,{C\over 1-\varepsilon}\right\}~=~f(\hat\rho_1)
 \\
 \label{fin2epsilon}
 f^{in}_2(t)&~=~\alpha_{1,2}\cdot f^{out}_1(t)~=~\varepsilon f(\hat\rho_1)~=~f(\hat\rho_2^{\,\varepsilon})
 \\
 \label{fin3epsilon}
 f^{in}_3(t)&~=~\alpha_{1,3}\cdot f^{out}_1(t)~=~(1-\varepsilon)f(\hat\rho_1)~=~f(\hat\rho_3^{\,\varepsilon})
 \end{align}
\noindent where $C$ denotes the flow capacity. Thus $\hat\rho_1$, $\hat\rho_2^{\,\varepsilon}$ and $\hat\rho_3^{\,\varepsilon}$ form a constant solution at the junction.

\item When $\varepsilon=0$,  the turning ratios satisfy $\alpha_{1,2}(t)\equiv 0$, $\alpha_{1,3}(t)\equiv 1$. Effectively, link $I_1$ is only connected to link $I_3$. We easily deduce that 
\begin{align}
\label{fout10}
f_1^{out}(t)&~\equiv~C
\\
\label{fin20}
f_2^{in}(t)&~\equiv~0
\\
\label{fin30}
f_3^{in}(t)&~\equiv~C
\end{align}
\noindent As a result, the solution on link $I_1$ is given by a backward-propagating rarefaction wave (or expansion wave, fan wave) with $\hat\rho_1$ and $\rho^c$ on the two sides. On link $I_3$, a forward-propagating rarefaction wave  with $\rho^c$ and $\hat\rho_3^{0}$ (that is, the limit of $\hat\rho_3^{\varepsilon}$ as $\varepsilon \to 0$) on the two sides is created. Link $I_2$ remains in a completely jam state with full density $\rho^{jam}$. See Figure \ref{figsolution} for an illustration of the solutions.
\end{itemize}

\begin{figure}[h!]
\centering
\includegraphics[width=\textwidth]{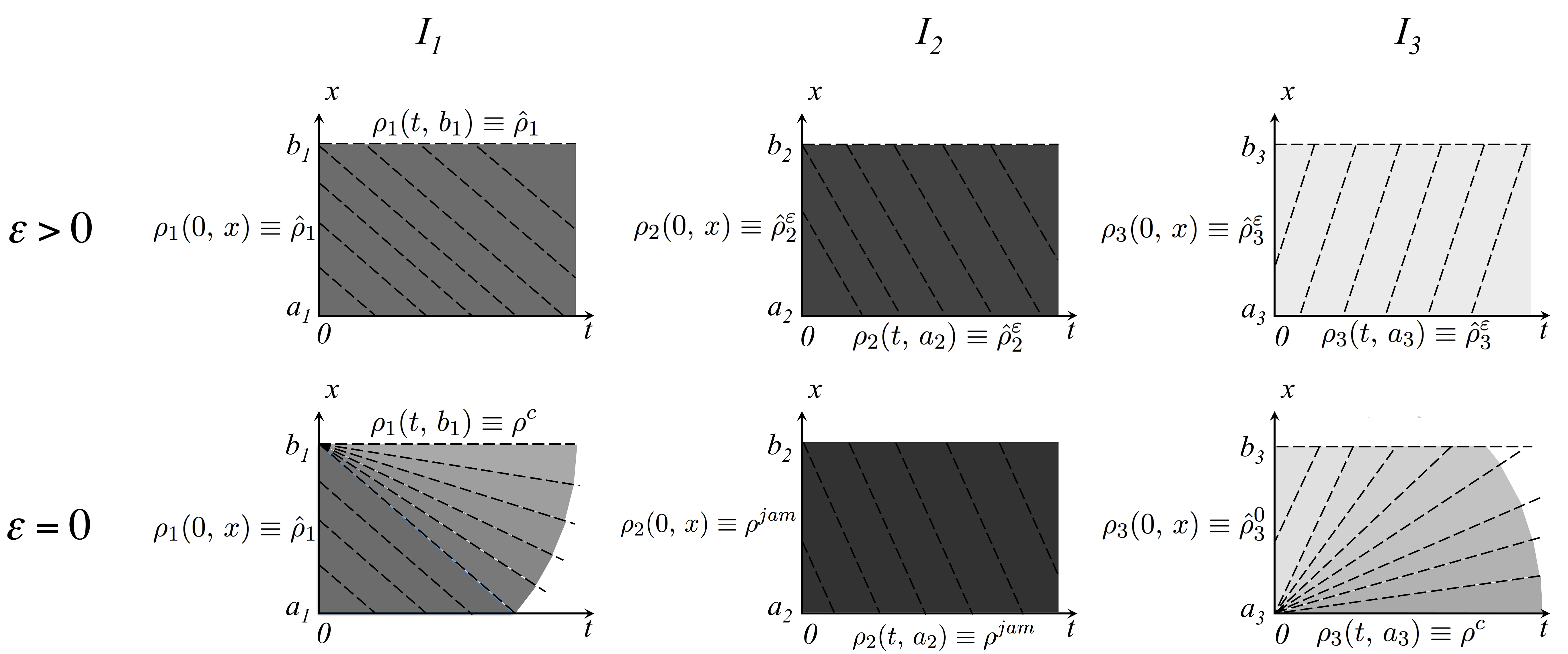}
\caption{Comparison of solutions on $I_1$, $I_2$ and $I_3$ for the two cases: $\varepsilon>0$ and $\varepsilon=0$. The dashed lines represent the kinematic waves (characteristics), and darker color indicates higher density. Notice that jumps in the boundary conditions across these two cases exist on $I_1$ and $I_3$.}
\label{figsolution}
\end{figure}

The two sets of solutions corresponding to $\varepsilon>0$ and $\varepsilon=0$ are shown in Figure \ref{figsolution}. We first notice that in these two cases, the boundary flows are very different, despite the infinitesimal difference in the PDV, namely from $\varepsilon>0$ to $\varepsilon=0$. In particular, both $f^{out}_1(t)$ and $f^{in}_3(t)$ jump from $f(\hat \rho_1)$ and $f(\hat\rho_3^{\varepsilon})$ (when $\varepsilon>0$) to $C$ (when $\varepsilon=0$). This is a clear indication of the discontinuous dependence of the diverge junction model on its initial conditions.

Let us zoom in on the mechanism that triggers such a discontinuity. According to \eqref{fout1epsilon}, the expression for $f^{out}_1(t)$ when $\varepsilon>0$ is
$$
f^{out}_1(t)~=~\min\left\{C,\,{f(\hat \rho_2^{\,\varepsilon})\over \varepsilon},\,{C\over 1-\varepsilon}\right\}~=~\min\left\{C,\, {\varepsilon f(\hat\rho_1)\over \varepsilon}\right\}
$$
As long as $\varepsilon$ is positive, the fraction ${\varepsilon f(\hat\rho_1)\over \varepsilon}= f(\hat\rho_1)<C$. However, when $\varepsilon=0$ we have an expression of  `${0\over 0}$', which should be equal to $\infty$ since link $I_2$ has effectively no influence on the junction, and we have $\min\{C,\,{0\over 0}\}=C$. The above argument amounts to the following statement: 
\begin{align*}
&\hbox{if }~ \varepsilon>0, \qquad C> {\varepsilon f(\hat \rho_1)\over\varepsilon};
\\
&\hbox{if }~\varepsilon=0,\qquad C< {\varepsilon f(\hat \rho_1)\over\varepsilon},
\end{align*}
\noindent which explains the jump in the solutions when $\varepsilon$ tends to zero.

\begin{remark}
The fact that $\hat \rho_2^{\varepsilon}$ tends to $\rho^{jam}$ as $\varepsilon\to 0$ plays a key role in this example. As we shall see later in Theorem \ref{divwpthm}, bounding $\hat \rho_2^{\varepsilon}$ away from the jam density is essential for the well-posedness. 
\end{remark}

\subsubsection{Sufficient conditions for the well-posedness of the diverge model}\label{subsubsecsufficient}

As we have previously demonstrated, the diverge model with time-varying vehicle turning ratios may not depend continuously on its initial conditions, which are defined in terms of the two-tuple $(\rho,\,\mu)$.  In this section, we propose additional conditions that guarantee the continuous dependence with respect to the initial data at the diverge junction. Our analysis relies on the method of wave-front tracking \citep{Bbook, GP} and the technique of {\it generalized tangent vectors} \citep{Bressan1993, BCP}. In order to be self-contained while keeping our presentation concise, we move some general background on these subjects and essential mathematical details to \ref{secappemb}.

\begin{theorem}{\bf (Well-posedness of the diverge model)}\label{divwpthm}
Consider the diverge junction (Figure \ref{figtwojunc}) and assume that 
\begin{enumerate}
\item there exists some $\delta>0$ such that the supplies $S_2(t)\geq\delta$, $S_3(t)\geq\delta$ for all $t$;
\item the path disaggregation variable $\mu(t,x)$ has bounded total variation in $t$.
\end{enumerate}
Then the solution of the diverge junction depends continuously on the initial and boundary conditions in terms of $\rho$ and $\mu$. 
\end{theorem}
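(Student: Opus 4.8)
\emph{Proof proposal.} The plan is to establish Lipschitz continuous dependence in the $L^1$ norm of the wave-front tracking approximate solutions on their initial and boundary data, uniformly with respect to the mesh size, and then pass to the limit; by the generalized tangent vector machinery of \cite{Bressan1993, BCP} recalled in \ref{secappemb}, this reduces the theorem to three ingredients. First, I would show that the diverge Riemann solver \eqref{fifodiv} is locally Lipschitz continuous as a map from the Riemann data $(\hat\rho_1,\hat\rho_2,\hat\rho_3)$ \emph{together with} the turning ratios $(\alpha_{1,2},\alpha_{1,3})$ to the boundary traces of $\rho$ and (via \eqref{chapDNL:LWRPDAEeqn66}) of $\mu$, on the region determined by hypothesis~(1). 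Second, I would verify that the approximate solution has only finitely many wave fronts and finitely many wave--junction interactions on $[0,T]$, which is where hypothesis~(2) is used. Third, I would propagate a generalized tangent vector through all interactions and close a discrete Gronwall estimate on its norm.

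The key step is the first one, and it is exactly here that hypothesis~(1) enters and the pathology of Section~\ref{subsubsecexample} is excluded. The quantity to control is $f^{out}_1=\min\{D_1,\,S_2/\alpha_{1,2},\,S_3/\alpha_{1,3}\}$. Since $D_1\le C$ and $S_j\ge\delta$ by hypothesis~(1), whenever $\alpha_{1,j}<\delta/C$ one has $S_j/\alpha_{1,j}>C\ge D_1$, so the $j$-th term is inactive in the minimum and may be ignored there; whenever $\alpha_{1,j}\ge\delta/C$ the function $(S_j,\alpha_{1,j})\mapsto S_j/\alpha_{1,j}$ has partial derivatives bounded by $C/\delta$ and $C^3/\delta^2$. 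Since $\alpha_{1,2}+\alpha_{1,3}=1$ at most one ratio can lie below the threshold, and a short case analysis gives that $f^{out}_1$ is globally Lipschitz in $(D_1,S_2,S_3,\alpha_{1,2},\alpha_{1,3})$ with constant $L_{\mathrm{div}}=L_{\mathrm{div}}(C,\delta)$; the flows $f^{in}_2,f^{in}_3$ and then the traces $\bar\rho_1,\bar\rho_2,\bar\rho_3$ (obtained by inverting $f$ on the appropriate branch) inherit this. The PDV traces come from \eqref{chapDNL:LWRPDAEeqn66}, which for a path through $I_1$ and $I_j$ reduces to $\mu_j^p=\mu_1^p/\alpha_{1,j}$; this is a ratio of a nonnegative summand to the sum $\alpha_{1,j}$ of all such summands, hence stays in $[0,1]$, and the only delicate configuration is $\alpha_{1,j}\to 0$ (zero receiving flow), which one handles by noting that a jump in $\mu_j$ is then carried by vanishing flow and contributes nothing to the conserved quantities or to $\alpha$ downstream --- this is where the convention $\rho_i=0\Rightarrow\mu_i=0$ and hypothesis~(2) are invoked.

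For the second ingredient, one approximates $\hat\rho_i$ and $\hat\mu_i^p$ by piecewise constant data, solves the Riemann problems at $t=0$ and at each later interaction time, and splits rarefactions into fans of small jumps. Inside each link the $\rho$-fronts interact classically \cite{Bbook, GP} and $\mathrm{TV}(\rho(t,\cdot))$ stays bounded; each $\mu^p$ is a passive, linearly degenerate field transported with speed $v_i(\rho_i)$, so $\mu$-fronts interact only with $\rho$-fronts, producing at most one new front per crossing, and hypothesis~(2) makes the total number of such crossings finite. At the junction, each $\rho$-front reaching it from $I_1$, each backward $\rho$-front reaching it from $I_2$ or $I_3$, and each $\mu$-front reaching it from $I_1$ (which changes $\alpha(t)$, hence by step one the Riemann solver's output) triggers a wave--junction interaction that emits at most one front into each of $I_1,I_2,I_3$; all of these are finite in number on $[0,T]$. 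For the third ingredient, with the tangent-vector norm $\|v\|=\sum_i(\|\delta\rho_i\|_{L^1}+\|\delta\mu_i\|_{L^1})+\sum_\alpha|\Delta_\alpha|\,|\xi_\alpha|$ (jump $\Delta_\alpha$ and shift $\xi_\alpha$ of the $\alpha$-th front), the norm is non-increasing across wave--wave interactions inside a link, changes by a factor $1+O(\text{front strength})$ --- summable against $\mathrm{TV}$ --- across a $\mu/\rho$ crossing, and across a wave--junction interaction the emitted contributions are bounded by $L_{\mathrm{div}}$ times the incident contribution. Summing over the finitely many interactions and sending the mesh to zero yields $\|v(t)\|\le e^{Lt}\|v(0)\|$ with $L=L(C,\delta,\mathrm{TV}_t(\mu),T)$, the infinitesimal form of $L^1$-Lipschitz, hence continuous, dependence of $(\rho,\mu)$ on the data.

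The main obstacle I anticipate is the wave--junction interaction estimate in the third ingredient: one must simultaneously track the shift of a $\rho$-front hitting the junction, the shift and perturbation of the $\mu$-fronts that determine $\alpha(t)$, and the response of a Riemann solver that is Lipschitz only on the region carved out by hypothesis~(1), and show the emitted fronts carry no more mass than a Gronwall factor permits. Hypothesis~(1) is precisely what upgrades the otherwise discontinuous map $\min\{D_1,S_2/\alpha_{1,2},S_3/\alpha_{1,3}\}$ to a Lipschitz one, and hypothesis~(2) is what keeps both the number of junction interactions and the accumulated amplification finite; the example of Section~\ref{subsubsecexample}, where $S_2\to 0$, shows that the argument fails at exactly this point when (1) is dropped.
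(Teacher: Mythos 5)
Your framework (wave-front tracking plus generalized tangent vectors) is the same as the paper's, and your first ingredient correctly isolates how hypothesis (1) is used: the observation that $\alpha_{1,j}<\delta/C$ forces $S_j/\alpha_{1,j}>C\geq D_1$, so that term drops out of the minimum, is exactly the paper's Step 4. The genuine gap is in your third ingredient. A wave--junction interaction at the diverge multiplies the tangent-vector norm by a factor from the matrix \eqref{thematrix}, i.e.\ by quantities such as $1/\alpha_{1,2}$ or $\alpha_{1,3}/\alpha_{1,2}$, which are bounded by $1/\gamma=C_1/\delta$ but are \emph{not} of the form $1+O(\hbox{front strength})$. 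Your plan to bound each such interaction by $L_{\mathrm{div}}$ and ``sum over the finitely many interactions'' therefore produces $L_{\mathrm{div}}^{N}$, where $N$ is the number of wave--junction interactions; $N$ is finite only for a fixed wave-front-tracking mesh and grows without bound as the approximation is refined, so this does not close into $e^{LT}$ with $L$ independent of the mesh. No Gronwall argument rescues this, because the per-interaction amplification does not tend to $1$ as the fronts become small.

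The missing idea is the composition identity for the amplification factors: writing $Q_{ij}$ for the factor of an interaction $I_i\to I_j$, one has $Q_{ij}Q_{jk}=Q_{ik}$ as long as the turning ratios are constant, so arbitrarily many consecutive $\rho$-wave--junction interactions contribute a single cumulative factor still bounded by $1/\gamma$; this is the content of the discussion at the end of \ref{secgtv}. Genuine growth therefore occurs only when a $\mu$-wave reaches the junction and changes $(\alpha_{1,2},\alpha_{1,3})$, and the paper's Step 3 shows that the accumulated factor then telescopes into a product of ratios $\alpha^{k}_{1,j}/\alpha^{k+1}_{1,j}$ over consecutive constancy intervals of $\mu$, which is bounded by $\exp\left(2\,TV(\mu)/\gamma\right)$ after taking logarithms. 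This is where hypothesis (2) actually enters --- not, as in your outline, merely to make the number of fronts and crossings finite (which holds anyway for each fixed mesh and gives nothing uniform in the mesh). Without this telescoping structure your estimate does not survive the passage to the limit, so the third step needs to be reworked along these lines.
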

\begin{proof}
The proof is long and technical, and thus will be presented in \ref{appsecThm1} following necessary mathematical preliminaries in \ref{secappemb}.
\end{proof}

\subsection{Well-posedness of the merge model}
For the merge junction depicted in Figure \ref{figtwojunc}, the path disaggregation variables $\mu$ become irrelevant since there is only one downstream link. According to Theorem 5.3 of \cite{LKWM}, the solution at the merge junction, in terms of density $\rho$, depends continuously on the initial and boundary conditions. Moreover, according to \eqref{chapDNL:eqn35}, the propagation speed of $\mu$ is the same as the vehicle speed $v(\rho)$, we thus conclude that $\mu$ also depends continuously on the initial and boundary conditions. This shows the well-posedness of the merge junction.

\section{Continuity of the delay operator}\label{secContinuityproof}

The proof of the continuity of the path delay operator is outlined as follows. We first show that the first hypothesis  of Theorem \ref{divwpthm} holds for networks that consists of only the merge and diverge junctions discussed in Section \ref{secmergediverge}; this will be done in Section \ref{secsupplymin}. Then, we propose some mild conditions on the fundamental diagram and the departure rates in Section \ref{secPDV} to ensure that the second hypothesis of Theorem \ref{divwpthm} holds. It then follows that both the merge and diverge models are well posed. Finally, in Section \ref{sec:dep} we show the well-posedness of the model in terms of $\rho$ and $\mu$ at each origin, where a point queue may be present. Put altogether, these results lead to the desired continuity for the delay operator.

\subsection{An estimation of minimum network supply}\label{secsupplymin}

This section provides a lower bound on the supply, which is a function of density, on any link in the network during the entire time horizon $[0,\,T]$. Our finding is that the jam density can never be reached within any finite time horizon $T$, and the supply anywhere in the network is bounded away from zero. As a consequence, grid lock will never occur in the dynamic network.

We denote by $\mathcal{D}$ the set of destinations in the augmented network with virtual links (see Section \ref{subsecvirtuallink}); that is, every destination $d\in\mathcal{D}$ is incident to a virtual link that connects $d$ to the rest of the network; see Figure \ref{figflowintoD}. For each $d\in\mathcal{D}$, we introduce its supply, denoted $S^d$, to be the maximum rate at which cars can be discharged from the virtual link connected to $d$.  Effectively, there exists a bottleneck between the virtual link and the destination; and the supply of the destination is equal to the flow capacity of this bottleneck; see Figure \ref{figflowintoD}. Notice that in some literature such a bottleneck is completely ignored and the destination is simply treated as a sink with infinite receiving capacity. This is of course a special case of ours once we set the supply $S^d$ to be infinity. However, such a supply may be finite and even quite limited under some circumstances due to, for example, ramp metering, limited parking spaces, or the fact that the destination is an aggregated subnetwork that is congested.

\begin{figure}[h!]
\centering
\includegraphics[width=.5\textwidth]{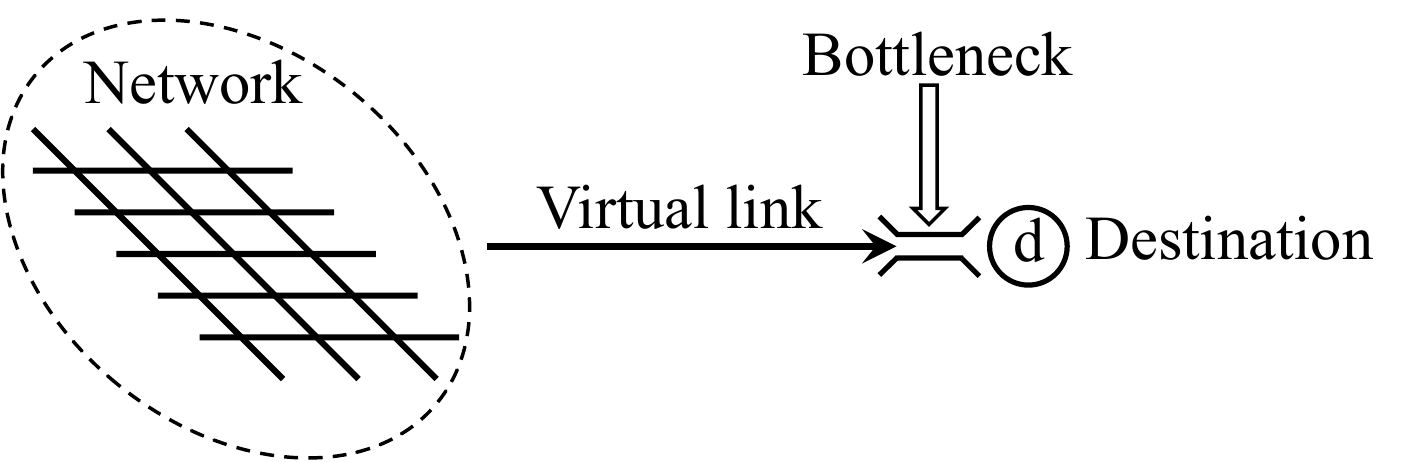}
\caption{Illustration of the supply (receiving capacity) of the destination $d$.}
\label{figflowintoD}
\end{figure}

We introduce below a few more concepts and notations. 
\begin{framed}
\begin{description} 
\item[$L=\displaystyle \min_{I_i\in \mathcal{\tilde A}} L_i$]:  the minimum link length in the network, including virtual links.
\item[$C^{min}=\displaystyle\min_{I_i\in\mathcal{\tilde A}} C_i$]: the minimum link flow capacity in the network, including virtual links.
\item[$\lambda=\displaystyle \max_{I_i\in\mathcal{\tilde A}}\left|f_i'(\rho_i^{jam}-)\right|$]: the maximum backward wave speed in the network.
\item[$\bar p=\displaystyle\min_{J\in\mathcal{V_M}}\left\{p^J~,~1-p^J\right\}>0$], where $p^J$ is the priority parameter for the merge junction $J$ (see Section \ref{subsecmergemodel}), $\mathcal{V}_M$ denotes the set of merge junctions in the network.
\item[$\delta^{\mathcal{D}}=\displaystyle\min_{d\in\mathcal{D}}S^d>0$]: the minimum supply among all the destination nodes.
\item[$\delta_k=\displaystyle\min_{I_i\in\mathcal{\tilde A}} \inf_{t\in\left[k{L\over\lambda},\,(k+1){L\over\lambda}\right]}\inf_{x\in[a_i,\,b_i]}S_i\left(\rho_i(t,\,x)\right)$]: the minimum supply at any location in the network during time interval $\left[k{L\over\lambda},\,(k+1){L\over\lambda}\right)$, where $k=0,\,1,\,\ldots$
 \end{description}
 \end{framed}

\begin{theorem}\label{thmdelta}
Consider a network consisting of only merge and diverge junctions as shown in Figure \ref{figtwojunc}. Given any vector of path departure rates, the dynamic network loading procedure described by the PDAE  system yields the following property in the solution:
\begin{equation}\label{deltaestimate}
\delta_k~\geq~\bar p^{k} \min\left\{\delta^{\mathcal{D}}~,~\bar pC^{min}\right\} \qquad\forall  k~=~0,\,1,\,\ldots 
\end{equation}
\end{theorem}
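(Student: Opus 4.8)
The plan is to prove \eqref{deltaestimate} by tracing every congested state backward to the downstream junction at which it was created, controlling at each step both the loss of supply incurred and the elapsed time. Throughout I use the standing (and, for the DNL over a commuting period, standard) assumption that the network is initially empty, so that any state with density above critical has been produced at a junction at a positive time. I also use repeatedly three elementary facts coming from \textbf{(F)} and the definitions \eqref{demanddef}--\eqref{supplydef}: on the congested branch $[\rho_i^c,\rho_i^{jam}]$ the flow $f_i$ is non-increasing and $S_i(\rho)=f_i(\rho)$ there, while $S_i(\rho)=C_i$ on the free-flow branch; every backward characteristic speed is bounded in modulus by $\lambda$; hence a congested state entering a link at its downstream end needs time at least $L_i/\lambda\ge L/\lambda$ to reach the upstream end, and in particular the entrance of every link stays uncongested throughout $[0,L/\lambda)$.

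The core is a one-step estimate at a junction. Suppose $\rho_i(t,x)>\rho_i^c$. By the theory of generalized characteristics for the scalar conservation law on $I_i$ (\cite{Dafermos}), equivalently by the structure of the wave-front-tracking solution (\cite{HR2002}), this value is carried unchanged along a backward characteristic from the downstream boundary $b_i$ at a time $\sigma$ with $0\le\sigma\le t-(b_i-x)/\lambda$, and there $f_i\big(\rho_i(t,x)\big)=f^{out}_i(\sigma)$, the exit flow assigned by the Riemann solver at the head of $I_i$; moreover $\rho_i(\sigma,b_i-)>\rho_i^c$, so $D_i(\sigma)=C_i$. Inspecting that junction: (i) if it is a destination $d$, then $f^{out}_i(\sigma)=\min\{C_i,S^d\}\ge\min\{C^{min},\delta^{\mathcal D}\}$ and the chain stops; (ii) if it is a merge with outgoing link $I_j$, then $f^{out}_i(\sigma)=C_i$ (stop) unless $f^{out}_i(\sigma)<C_i$, in which case the supply constraint in \eqref{DDS} must bind and a short inspection of \eqref{DDS}--\eqref{RSmergedef} yields $f^{out}_i(\sigma)\ge p^{J}S_j(\sigma)\ge\bar p\,S_j(\sigma)$; (iii) if it is a diverge with outgoing links $I_j,I_{j'}$, then by \eqref{fifodiv}, since the turning ratios lie in $[0,1]$, either $f^{out}_i(\sigma)=C_i$ (stop) or $f^{out}_i(\sigma)\ge\min\{S_j(\sigma),S_{j'}(\sigma)\}$. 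In cases (ii)--(iii) the quantity on the right is the supply at the \emph{entrance} of a downstream link: if that entrance is uncongested it equals the link capacity $\ge C^{min}$ and we stop; otherwise we invoke the backward-characteristic step again, now across an entire link, reaching a congested downstream-boundary state whose creation time is at least $L/\lambda$ earlier and whose flow bounds the previous supply from below. Thus each ``round'' multiplies the accumulated constant by $\bar p$ (at a merge) or by $1$ (at a diverge) and decreases the reference time by at least $L/\lambda$.

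Now fix $k$ and a point $(t,x)$ with $t\in[kL/\lambda,(k+1)L/\lambda)$ nearly realizing the infimum in $\delta_k$ and with $\rho_i(t,x)>\rho_i^c$ (if no such point exists then $\delta_k\ge C^{min}$ and \eqref{deltaestimate} is trivial). Iterating the round above produces downstream-boundary states at times $\tau_0\le t$ and $\tau_{m+1}\le\tau_m-L/\lambda$, hence $\tau_m\le t-mL/\lambda$; since these must stay non-negative, and since as soon as a reference time drops below $L/\lambda$ the next downstream entrance is uncongested and forces the chain to stop, the iteration terminates at some step $m\le k$, having accumulated a product $P_m\ge\bar p^{\,m}\ge\bar p^{\,k}$. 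Reading off the terminal alternatives: we stop either with $f_i\big(\rho_i(t,x)\big)\ge P_m\min\{C^{min},\delta^{\mathcal D}\}\ge\bar p^{\,k}\min\{C^{min},\delta^{\mathcal D}\}$ (destination, unrestricted outflow, or a diverge whose binding downstream link has an uncongested entrance), or with $f_i\big(\rho_i(t,x)\big)\ge P_m\,\bar p\,C^{min}\ge\bar p^{\,k+1}C^{min}$ (a merge whose outgoing link has an uncongested entrance); in either case $f_i\big(\rho_i(t,x)\big)\ge\bar p^{\,k}\min\{\delta^{\mathcal D},\bar p\,C^{min}\}$. Since $S_i(\rho_i(t,x))$ equals $f_i(\rho_i(t,x))$ on the congested branch and is $\ge C^{min}$ off it, taking the infimum over $I_i$, $t$ and $x$ gives \eqref{deltaestimate}; the case $k=0$ is the same argument reduced to a single step.

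The step I expect to be the main obstacle is the backward-tracing claim: rigorously asserting, within the BV / wave-front-tracking solution of the LWR equation on a single link — whose dynamics mix interacting shocks and rarefactions with a boundary flux that is itself only BV in time and is prescribed by the junction Riemann solver — that a congested interior value is exactly the value transported by a genuine backward characteristic issued from the downstream boundary, with the bound $\sigma\le t-(b_i-x)/\lambda$ on the transport time. Once this is in place, the remaining ingredients — the computations (i)--(iii) at the two junction types, the bookkeeping of the $\bar p$-factors against the $L/\lambda$-delays, and the final elementary minimization — are routine; note in particular that the argument uses neither the well-posedness of the diverge model (Theorem \ref{divwpthm}) nor uniqueness of the DNL solution, so the estimate holds a priori for any solution of the PDAE system.
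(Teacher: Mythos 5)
Your proposal is correct and is essentially the paper's own proof run in the opposite time direction: the per-junction estimates (a factor $\bar p$ lost at a merge, nothing lost at a diverge or when the demand binds, $\delta^{\mathcal{D}}$ at a destination), the $L/\lambda$ minimum traversal time for backward waves, and the resulting recursion $\delta_k\geq\min\{\delta^{\mathcal{D}},\,\bar p\,\delta_{k-1}\}$ are exactly those of the paper's forward induction over the time slabs $\big[k{L/\lambda},\,(k+1){L/\lambda}\big)$, merely reorganized as a backward trace from an arbitrary congested point. The backward-transport fact you flag as the main obstacle is also the implicit backbone of the paper's argument, which states it in forward form: higher-than-critical densities can only be created at the downstream end of a link by one of five enumerated junction interactions and then propagate upstream at speed at most $\lambda$.
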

\begin{proof}
We postpone the proof until \ref{subsecappthmdelta} for a more compact presentation.
\end{proof}

Theorem \ref{thmdelta} guarantees that the supply values anywhere in the network during the period $\big[k{L\over\lambda},\,(k+1){L\over\lambda}\big]$ are uniformly bounded below by some constant that depends only on $k$, although such a constant may decay exponentially as $k$ increases. Given that any dynamic network loading problem is conceived in a finite time horizon, we immediately obtain a lower bound on the supplies, as shown in the corollary below. 

\begin{corollary}\label{cordelta}
Under the same setup as Theorem \ref{thmdelta}, we have
\begin{equation}
\min_{I_i\in\mathcal{A}}\inf_{x\in[a_i,\,b_i]}S_i\big(\rho_i(t,\,x)\big)~\geq~\bar p^{\lfloor {t\over L/\lambda}\rfloor} \left\{\delta^{\mathcal{D}}~,~\bar p C^{min}\right\}\qquad\forall t\in[0,\,T]
\end{equation}
and
\begin{equation}
\min_{I_i\in\mathcal{A}}\inf_{x\in[a_i,\,b_i]}\inf_{t\in[0,\,T]}S_i\big(\rho_i(t,\,x)\big)~\geq~\bar p^{\lfloor {T\lambda\over L}\rfloor}\min\left\{\delta^{\mathcal{D}}~,~\bar p C^{min}\right\}
\end{equation}
where the operator $\lfloor\cdot\rfloor$ rounds its argument to the nearest integer from below. 
\end{corollary}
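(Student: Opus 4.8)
The plan is to obtain Corollary~\ref{cordelta} as a direct bookkeeping consequence of Theorem~\ref{thmdelta}, by locating each time instant inside the appropriate window $\big[k\tfrac{L}{\lambda},(k+1)\tfrac{L}{\lambda}\big)$ and using monotonicity of $\bar p^{\,k}$ in $k$. First I would fix $t\in[0,T]$ and set $k=k(t)\doteq\lfloor t\lambda/L\rfloor$, so that $t\in\big[k\tfrac{L}{\lambda},(k+1)\tfrac{L}{\lambda}\big)$. By the very definition of $\delta_k$ as the infimum of $S_i\big(\rho_i(\cdot,\cdot)\big)$ over all links $I_i\in\mathcal{\tilde A}$, all positions, and all times in that window, we have $\inf_{x\in[a_i,b_i]}S_i\big(\rho_i(t,x)\big)\geq\delta_k$ for every $I_i\in\mathcal{\tilde A}$, and in particular for every $I_i\in\mathcal{A}\subset\mathcal{\tilde A}$. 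Theorem~\ref{thmdelta} then yields $\delta_k\geq\bar p^{\,k}\min\{\delta^{\mathcal{D}},\bar pC^{min}\}$, which is exactly the first asserted inequality once we recall $k=\lfloor t/(L/\lambda)\rfloor$.

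For the second inequality I would take the infimum of the first over $t\in[0,T]$. The only point needing attention is the exponent: since each priority parameter $p^J\in(0,1)$, we have $\bar p=\min_{J\in\mathcal{V}_M}\{p^J,1-p^J\}\in(0,1/2]$, hence $\bar p^{\,k}$ is strictly decreasing in $k$, so the right-hand side $\bar p^{\,k(t)}\min\{\delta^{\mathcal{D}},\bar pC^{min}\}$ is smallest precisely when $k(t)$ is largest. As $t$ ranges over $[0,T]$ the maximal value of $k(t)=\lfloor t\lambda/L\rfloor$ is $\lfloor T\lambda/L\rfloor$ (attained near $t=T$), whence $\inf_{t\in[0,T]}\bar p^{\,k(t)}=\bar p^{\lfloor T\lambda/L\rfloor}$ and therefore $\min_{I_i\in\mathcal{A}}\inf_{x}\inf_{t\in[0,T]}S_i\big(\rho_i(t,x)\big)\geq\bar p^{\lfloor T\lambda/L\rfloor}\min\{\delta^{\mathcal{D}},\bar pC^{min}\}$, as claimed.

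Since the whole argument is just a restatement of Theorem~\ref{thmdelta}, there is no substantive obstacle; the only minor items to be careful about are (i) the half-open/closed endpoint convention for the windows $\big[k\tfrac{L}{\lambda},(k+1)\tfrac{L}{\lambda}\big)$, which matters only when $t$ is an exact integer multiple of $L/\lambda$ and is absorbed by the definition of $\lfloor\cdot\rfloor$, and (ii) the restriction of the outer minimum to the original link set $\mathcal{A}$ rather than the augmented set $\mathcal{\tilde A}$, which merely weakens the bound and hence is harmless.
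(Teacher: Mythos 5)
Your proposal is correct and is essentially the paper's own argument: the paper simply states that both inequalities are immediate consequences of the bound $\delta_k\geq\bar p^{\,k}\min\{\delta^{\mathcal{D}},\bar pC^{min}\}$ from Theorem \ref{thmdelta}, and your write-up just makes the window-indexing and the monotonicity of $\bar p^{\,k}$ explicit.
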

\begin{proof}
Both inequalities are immediately consequences of \eqref{deltaestimate}.
\end{proof}

\begin{remark}
Corollary \ref{cordelta} shows that, for any network consisting of the merge and diverge junctions, the supply values are uniformly bounded from below at any point in the spatial-temporal domain. In particular, a jam density can never occur. Moreover, such a network is free of  complete gridlock \footnote{A complete gridlock refers to the situations where a non-zero static solution of the PDAE system exists. Intuitively, it means that a complete jam $\rho=\rho^{jam}$ is formed somewhere in the network and the static queues do not dissipate in finite time.}. 
\end{remark}

\subsection{Estimation regarding the path disaggregation variables}\label{secPDV}

In this section we establish some properties of the path disaggregation variable
$\mu$, which serve to validate the second hypothesis of Theorem \ref{divwpthm}.

\begin{lemma}\label{lemma:ben}
Assume that there exists $M>0$ and $\epsilon>0$ so that the following hold:
\begin{enumerate}
\item For all links $I_i$, the fundamental diagrams $f_i(\cdot)$ are uniformly  linear near the zero density; more precisely, $f_i'(\rho)$ is constant for $\rho\in[0,\epsilon]$ for all $I_i\in\mathcal{\tilde A}$.
\item Each $f_i(\cdot)$ has non-vanishing derivative, i.e.
$|f_i'(\rho)|\geq \epsilon, \forall \rho\in[0,\rho_i^{jam}]$.
\item The departure rates $\{h_p(\cdot), p\in\mathcal{P}\}$ are uniformly bounded, have bounded total variation (TV), and bounded away from zero when they are non-zero; i.e. $TV(h_p)<M$ and $h_p(t)\in \{0\}\cup [\epsilon,\, M]$ for all $p\in\mathcal{P}$ and almost every $t\in[0,\,T]$ .
\end{enumerate}
Then the path disaggregation variables $\mu(t,\,x)$ are either zero, or uniformly bounded away from zero. Moreover, they have bounded variation.
\end{lemma}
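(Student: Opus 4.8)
The plan is to track the path disaggregation variables along vehicle trajectories, exploiting the fact that, by \eqref{chapDNL:LWRPDAEeqn3}, each $\mu_i^p(\cdot,\cdot)$ is constant along characteristics of the conservation law \eqref{chapDNL:LWRPDAEeqn2}, which coincide with car trajectories since both travel at speed $v_i(\rho_i)$. Consequently the values taken by $\mu$ anywhere in the space-time domain are exactly the values prescribed at the boundaries: either at an origin via \eqref{chapDNL:LWRPDAEeqn65}, or at an internal junction via the transmission rule \eqref{chapDNL:LWRPDAEeqn66}. The argument therefore reduces to two claims: (i) the boundary values of $\mu$ are either zero or bounded below by a fixed positive constant, and (ii) the total variation of $\mu$ is controlled along each link and does not blow up when crossing junctions.

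First I would handle the lower bound. At an origin $s$, \eqref{chapDNL:LWRPDAEeqn65} gives $\mu_s^p = h_p(t)/\sum_{q\in\mathcal P^s} h_q(t)$, which, by hypothesis~3, is either $0$ (when $h_p(t)=0$) or lies in $[\epsilon/(|\mathcal P^s| M),\,1]$ since the numerator is at least $\epsilon$ and the denominator at most $|\mathcal P^s| M$. This gives a uniform positive lower bound $\mu_{\min}^{(0)}>0$ at all origins. Propagating along a link preserves the value exactly, so the only place the bound can degrade is at a junction via \eqref{chapDNL:LWRPDAEeqn66}. There, $\mu_j^p(t,a_j) = f_i(\rho_i(t,b_i))\,\mu_i^p(t,b_i)/f_j(\rho_j(t,a_j))$. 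Since the FIFO transmission partitions the outgoing flow among incoming paths, one has $f_j(\rho_j(t,a_j)) = \sum_{i,\,p': \{I_i,I_j\}\subset p'} f_i(\rho_i(t,b_i))\,\mu_i^{p'}(t,b_i)$, and for a \emph{diverge} node ($|\mathcal I^J|=1$) this denominator is $\le f_i(\rho_i(t,b_i))$ because $\sum_{p'} \mu_i^{p'}\le 1$; hence $\mu_j^p(t,a_j)\ge \mu_i^p(t,b_i)$, so the bound does not degrade at diverges. For a \emph{merge} node ($|\mathcal O^J|=1$, $|\mathcal I^J|\ge 2$) the denominator can be as large as the sum of two incoming flows, so the ratio can shrink; but $f_i(\rho_i(t,b_i))/f_j(\rho_j(t,a_j))$ is bounded below by the actual split of flow from $I_i$, which by the merge priority rule (R1) and hypothesis~2 (non-vanishing $f_i'$, so flows that are nonzero are bounded below) is bounded below by a fixed positive constant depending on $\bar p$, $\epsilon$, and $C^{\min}$. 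Since any path traverses at most finitely many junctions (bounded by $|\mathcal{\tilde A}|$), iterating these estimates yields a single uniform positive lower bound for all nonzero values of $\mu$.

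Next I would bound the total variation. On each link, $\mathrm{TV}(\mu_i^p(t,\cdot))$ along a vehicle trajectory is constant, so it suffices to bound the temporal total variation of the traces $\mu_i^p(t,b_i)$ and $\mu_i^p(t,a_i)$. At an origin, $\mathrm{TV}_t\big(\mu_s^p(\cdot,a_s)\big)$ is controlled by $\mathrm{TV}(h_p)$ and the lower bound on $\sum_q h_q$ via the quotient rule for bounded variation (a ratio $u/v$ with $u,v$ of bounded variation and $v\ge c>0$ has $\mathrm{TV}(u/v)\le (\mathrm{TV}(u) + \|u\|_\infty \mathrm{TV}(v)/c)/c$), and the reparametrization by the queue exit time $\lambda_s$ does not increase total variation since $\lambda_s$ is monotone; hypotheses~1 and~3 make this finite. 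Along a link, one then invokes the well-posedness / BV-stability results for the junction Riemann solvers (Theorem~\ref{divwpthm} for diverges, and the result of \cite{LKWM} for merges), together with the finite-speed-of-propagation bound $\lambda$, to conclude that the traces $\rho_i(t,b_i)$, $\rho_i(t,a_i)$ and hence $f_i(\rho_i(t,\cdot))$ have bounded variation in $t$; feeding this and the incoming $\mu$-traces into \eqref{chapDNL:LWRPDAEeqn66} and again using the quotient rule (with denominator bounded below by $\delta_k>0$ from Theorem~\ref{thmdelta}) propagates the BV bound across the junction. Since the horizon is finite, only finitely many interaction times occur and the bounds accumulate additively over the at most $|\mathcal{\tilde A}|$ links of a path.

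\textbf{Main obstacle.} The delicate point is the lower bound on $\mu$ at merge junctions and, relatedly, keeping the denominator $f_j(\rho_j(t,a_j))$ in \eqref{chapDNL:LWRPDAEeqn66} bounded away from zero: a priori the entering flow on an outgoing link could be arbitrarily small even while some incoming flow is positive, which would make the ratio blow up or collapse. Resolving this requires combining the merge priority rule (R1) — which forces the two incoming flows to be comparable up to the factor $\bar p$ whenever both are active — with hypothesis~2 to rule out positive-but-vanishingly-small flows, and with Theorem~\ref{thmdelta} to keep supplies, hence realized flows at active junctions, uniformly positive. Making this quantitative and uniform over $[0,T]$, and checking it survives the composition over all junctions on a path, is where the real work lies; the BV bookkeeping, while tedious, is routine once the lower bounds are in hand.
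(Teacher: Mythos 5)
Your overall strategy (propagate a lower bound and a TV bound for $\mu$ from the origins through the junctions along car trajectories) is in the spirit of the paper's proof, but two steps do not hold up. First, the crux you correctly identify --- bounding the merge factor $f_4^{out}/(f_4^{out}+f_5^{out})$ away from zero --- is not resolved by the ingredients you cite. Hypothesis 2 ($|f_i'|\geq\epsilon$) does not imply that nonzero flows are bounded below: $f_i(\rho)\to 0$ continuously as $\rho\to 0^+$, so a positive demand $D_4$ can be arbitrarily small (e.g.\ inside a rarefaction decaying towards the vacuum), in which case the relaxed priority rule gives $f^{out,*}_{4}=D_4$ and your factor $D_4/S_6$ collapses to zero. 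The hypothesis that actually does this job in the paper is hypothesis 1: linearity of $f_i$ on $[0,\epsilon]$ makes every wave adjacent to the vacuum a contact discontinuity traveling at the common speed $f_i'(0)$, which coincides with the $\mu$-characteristic speed at low density; combined with hypothesis 3 (inflows quantized in $\{0\}\cup[\epsilon,M]$, hence nonzero only on finitely many intervals) this prevents the creation of intermediate small positive densities and flows, so that $\mu$ and the turning ratios take values in $\{0\}\cup[\epsilon',1]$ for some $\epsilon'>0$. Your proposal uses hypothesis 1 only incidentally, so the key mechanism for the lower bound is missing, and your own ``main obstacle'' paragraph concedes the gap without closing it.

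Second, your total-variation argument is circular: to get bounded variation of the boundary traces you invoke Theorem \ref{divwpthm}, but the second hypothesis of that theorem is precisely that $\mu$ has bounded total variation --- the statement this lemma exists to establish (the paper introduces it expressly to validate that hypothesis). The paper instead argues directly with wave-front tracking: hypothesis 2 converts $TV(h_p)$ into a TV bound for $\rho$ and $\mu$ on the virtual links; the strength of a $\mu$-wave can then increase only upon interaction with a diverge, by a factor $1/\alpha\leq 1/\epsilon'$ (using the lower bound on the turning ratios from the previous step); and since $\mu$-waves travel only forward at uniformly bounded speed, each meets only finitely many diverges, so the total variation stays finite. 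You would need to replace the appeal to Theorem \ref{divwpthm} with a direct wave-interaction count of this kind to avoid the circularity.
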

\begin{proof}
The proof is moved to \ref{subsecapplemma:ben}. 
\end{proof}

\begin{remark}
Notice that assumptions 1 and 2 of Lemma \ref{lemma:ben}
are satisfied by the Newell-Daganzo (triangular) fundamental diagrams, where both the free-flow and congested branches of the FD are linear. Moreover, given an arbitrary fundamental diagram, one can always make minimum modifications at $\rho=0$ and $\rho=\rho^c$ to comply with conditions 1 and 2.  Assumption 3 of Lemma \ref{lemma:ben} is satisfied by any departure rate that is a result of a finite number of cars entering the network. And, again, any departure rate can be adjusted to satisfy this condition with minor modifications. 
\end{remark}

\subsection{Well-posedness of the queuing model at the origin with respect to departure rates}\label{sec:dep}
In this section we discuss the continuous dependence of the queues
$q_s(t)$ and the solutions $\rho_i$ and $\mu_i^p$ with respect to the path departure rates $h_p(t)$, $p\in\mathcal{P}$. The analysis below relies on knowledge of the wave-front tracking algorithm and generalized tangent vector, for which an introduction is provided in \ref{secappemb}.

Following \cite{HKP}, we introduce a generalized tangent vector
$(\eta_s,\xi^i_\rho,\xi^{i,p}_{\mu})$ for the triplet  $(q_s,\rho_i,\mu_i^p)$ where
$\eta_s\in\mathbb{R}$ is a scalar shift of the queue $q_s(\cdot)$, i.e.
the shifted queue is $q_s(\cdot) +\eta_s$, while the tangent
vectors of $\rho_i$ and  $\mu_i^p$ are defined in the same way as in \ref{secgtv}.
The tangent vector norm of $\eta_s$ is simply its absolute value $|\eta_s|$.

\begin{lemma}\label{lemmabenqueue}
Assume that the departure rates $\{h_p(\cdot),\,p\in\mathcal{P}\}$ are piecewise constant, and let $\xi_p$ be a tangent vector defined via shifting the jumps in $h_p(\cdot)$.
Then the tangent vector $(\eta_s,\xi^i_\rho,\xi^{i,p}_{\mu})$
is well defined, and its norm is equal to that of $\xi_p$ and bounded for all times.
\end{lemma}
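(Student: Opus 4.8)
The plan is to track the generalized tangent vector along the dynamics of the queue model at an origin $s$, and to show that shifting the jump points of a piecewise-constant departure rate $h_p(\cdot)$ induces exactly a shift of the downstream data on the virtual link $I_s$, with no amplification. First I would recall the Vickrey-type dynamic \eqref{chapDNL:LWRPDAEeqn1} and observe that, for a piecewise-constant family of departure rates, the aggregate inflow $\sum_{p\in\mathcal{P}^s} h_p(t)$ is itself piecewise constant, so the queue $q_s(t)$ is piecewise linear (its slope changes only at the jump times of $h$ or when $q_s$ hits zero or when $S_s$ changes). A shift $\xi_p$ of the jumps of $h_p$ translates, through \eqref{chapDNL:LWRPDAEeqn1}, into a scalar shift $\eta_s$ of the queue trajectory and a shift of the times at which mass is released into $I_s$. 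The first key step is therefore to differentiate the flow-conservation relation \eqref{lambdaqueuedef} (equivalently \eqref{chapDNL:LWRPDAEeqn15}) with respect to the perturbation parameter: since the exit flow from the queue equals $f_s(\rho_s(\tau,a_s))$ and the cumulative inflow is $\int_0^t \sum_p h_p$, implicit differentiation gives a relation between $\eta_s$, the shift in $\lambda_s(t)$, and the boundary shift $\xi^s_\rho$ at $x=a_s$; because the same total mass is merely displaced in time, the induced shift in the boundary density data on $I_s$ has norm bounded by that of $\xi_p$.

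Next I would propagate this boundary tangent vector into the network. The perturbation entering $I_s$ at its upstream boundary is, by the previous step, a finite collection of shifts of wave fronts whose total tangent-vector norm is at most $\|\xi_p\|$. Invoking the interaction estimates underlying the wave-front tracking construction (as recorded in \ref{secappemb} and used in Theorem \ref{divwpthm}), together with the fact that $\mu$ propagates at the vehicle speed and hence merely rides along with the $\rho$-fronts (so $\xi^{i,p}_\mu$ is controlled once $\xi^i_\rho$ is), one shows that the norm of $(\xi^i_\rho,\xi^{i,p}_\mu)$ does not increase through within-link evolution, and—crucially—does not increase through the merge and diverge Riemann solvers. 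For the diverge node this is exactly the content of Theorem \ref{divwpthm}, whose two hypotheses are guaranteed here: hypothesis (1) by Corollary \ref{cordelta} (the supplies on $I_2,I_3$, and everywhere, are bounded away from zero on $[0,T]$), and hypothesis (2) by Lemma \ref{lemma:ben} (the PDVs have bounded variation), provided the standing assumptions of those results are in force. For the merge node the non-amplification is Theorem 5.3 of \cite{LKWM} as invoked in Section \ref{subsecwellposed}. Chaining these, the tangent vector is well defined for all $t\in[0,T]$ and its norm stays equal to $\|\xi_p\|$ — or at any rate bounded by it — so in particular it is finite.

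The main obstacle I expect is the careful analysis at the origin itself, specifically the differentiation of the queue dynamic \eqref{chapDNL:LWRPDAEeqn1} across the switching between the regimes $q_s(t)>0$ and $q_s(t)=0$: the right-hand side is discontinuous, and a shift of the jump points of $h_p$ can move the instants at which the queue empties or re-forms, so one must verify that $\eta_s(\cdot)$ remains a well-defined scalar (no blow-up) through these switching times and that the map from $\xi_p$ to the shift of the boundary data on $I_s$ is genuinely an isometry (or at least norm-nonincreasing). A secondary, more bookkeeping-type difficulty is confirming that the shift $\xi^{i,p}_\mu$ of the PDV tangent vectors is consistent with \eqref{chapDNL:LWRPDAEeqn65} — i.e.\ that perturbing the ratios $h_p(t)/\sum_q h_q(t)$ at the queue exit does not create additional tangent-vector mass beyond what $\xi_p$ already accounts for; here the boundedness-away-from-zero of the nonzero $h_p$ from Lemma \ref{lemma:ben}, assumption 3, is what keeps the quotient Lipschitz and hence the induced $\mu$-shift bounded.
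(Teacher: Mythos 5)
Your proposal identifies the right object (a shift $\eta_s$ of the queue plus a shift of the wave fronts entering the virtual link, induced by shifting the jump times of $h_p$) and correctly flags that the whole lemma hinges on showing the map $\xi_p\mapsto(\eta_s,\xi^i_\rho,\xi^{i,p}_\mu)$ is an isometry. But you leave precisely that step as an ``obstacle'' rather than resolving it, and it is the entire content of the paper's proof. The computation is elementary and should be carried out: if the $j$-th jump of $h_p$, of size $\Delta_j h_p$ at time $t_j$, is shifted by $\zeta_p^j$, then in the regime $q_s>0$ no wave enters $I_s$ and the queue acquires the shift $\eta_s=\Delta_j h_p\cdot\zeta_p^j$, whose tangent norm $|\eta_s|$ equals $|\zeta_p^j|\,|\Delta_j h_p|=\|\xi_p\|$; in the regime $q_s=0$ a wave $(\rho^-,\rho^+)$ enters $I_s$ with shift $\xi^i_\rho=\lambda\zeta_p^j$ ($\lambda$ the Rankine--Hugoniot speed), so $\xi^i_\rho(\rho^+-\rho^-)=\zeta_p^j\bigl(f(\rho^+)-f(\rho^-)\bigr)=\zeta_p^j\,\Delta_j h_p$ and the norm is again exactly $\|\xi_p\|$. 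Without these two lines you cannot assert the \emph{equality} of norms claimed in the statement, and indeed you hedge to ``or at any rate bounded by it.''

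A second, more substantive omission: ``bounded for all times'' requires analyzing what happens when a \emph{backward} $\rho$-wave on the virtual link (spillback) reaches the point queue and changes its discharge rate. This is not one of your ``switching between $q_s>0$ and $q_s=0$'' events, and it is not covered by within-link evolution or by the junction Riemann solvers. The paper handles it by noting $\dot q_s^+-\dot q_s^-=f(\rho^-)-f(\rho^+)$, whence $\eta_s=\frac{\xi^i_\rho}{\lambda}\bigl(f(\rho^-)-f(\rho^+)\bigr)=\xi^i_\rho(\rho^--\rho^+)$, i.e.\ the wave's tangent norm converts exactly into a queue shift with no amplification. Finally, your detour through Theorem \ref{divwpthm}, Corollary \ref{cordelta} and the merge well-posedness is out of scope here: this lemma is purely local to the origin node, and the network-wide chaining of the nodal well-posedness results is performed separately in the final continuity theorem, not inside this proof.
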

\begin{proof}
The proof is postponed until \ref{subsecapplemmabenqueue}.
\end{proof}

\subsection{Final proof of continuity for the delay operator}\label{subseccontinuity}

At the end of this paper, we are able to prove the continuity result for the delay operator based on a series of preliminary results presented so far.  
\begin{theorem}{\bf (Continuity of the delay operator)}
Consider a network consisting of merge and diverge junctions described in Section \ref{secmergediverge}, under the same assumptions stated in Lemma \ref{lemma:ben},  the path delay operator, as a result of the dynamic network loading model \eqref{chapDNL:LWRPDAEeqn1}-\eqref{chapDNL:LWRPDAEeqn7}, is continuous.
\end{theorem}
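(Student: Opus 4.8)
The plan is to assemble the continuity of the path delay operator from the well-posedness results already established for the two elementary junction models, together with the structural estimates of Sections~\ref{secsupplymin}--\ref{sec:dep}. The key observation is that the delay operator $h\mapsto D(h)$ is, by construction, a composition of maps: the path departure rates feed into the origin queues $q_s$ and the upstream PDV data $\mu_s^p$ via \eqref{chapDNL:LWRPDAEeqn1}--\eqref{chapDNL:LWRPDAEeqn65}; these propagate through the network of merge and diverge junctions via \eqref{chapDNL:LWRPDAEeqn2}--\eqref{chapDNL:LWRPDAEeqn6}, producing the densities $\rho_i$ and PDVs $\mu_i^p$ on every link; finally the exit-time functions $\lambda_s,\lambda_i$ are read off from \eqref{chapDNL:LWRPDAEeqn15}--\eqref{chapDNL:LWRPDAEeqn16} and composed to give $D_p$ through \eqref{chapDNL:LWRPDAEeqn7}. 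Continuity of each link in this chain, followed by continuity of composition, gives the result. I would carry this out in the following order.

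First I would fix the function spaces. Work with $h_p\in L^1([0,T])$ (equivalently, approximate by piecewise-constant departure rates and pass to the limit, as is standard in the wave-front tracking framework of \ref{secappemb}); the outputs $D_p$ are measured in $L^1$ or $L^{\infty}_{\mathrm{loc}}$. Under the hypotheses of Lemma~\ref{lemma:ben}, the departure rates are uniformly bounded, of bounded total variation, and bounded away from zero when nonzero, so the whole DNL evolution stays in a set of $BV$ functions with uniformly bounded total variation --- this is what makes the generalized-tangent-vector estimates closable. Next I would verify that the two hypotheses of Theorem~\ref{divwpthm} hold network-wide: hypothesis~(1), the uniform lower bound $\delta>0$ on the supplies at every diverge junction, is exactly the content of Theorem~\ref{thmdelta} and Corollary~\ref{cordelta} (with $\delta=\bar p^{\lfloor T\lambda/L\rfloor}\min\{\delta^{\mathcal D},\bar pC^{min}\}$), and hypothesis~(2), bounded total variation of $\mu$ in $t$, is the content of Lemma~\ref{lemma:ben}. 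Hence every diverge junction in the network is well posed, and by the remark following it every merge junction is well posed as well.

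Then I would run the generalized-tangent-vector / wave-front-tracking argument globally. Given two nearby piecewise-constant departure-rate profiles $h$ and $h+\varepsilon\xi$, Lemma~\ref{lemmabenqueue} controls how the perturbation enters through each origin: the tangent vector $(\eta_s,\xi^i_\rho,\xi^{i,p}_\mu)$ is well defined and its norm is bounded (uniformly in $t$) by that of $\xi_p$. Propagating this tangent vector through the network, the well-posedness of each junction (Theorem~\ref{divwpthm} and the merge result) gives that the norm of the tangent vector remains uniformly bounded on $[0,T]$ --- interactions with junctions can only increase it by a controlled factor, and there are finitely many junctions and, on $[0,T]$, finitely many wave interactions. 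Integrating the tangent-vector estimate along a path in the space of departure rates yields a Lipschitz-type bound $\|\rho_i(t,\cdot,h)-\rho_i(t,\cdot,h')\|_{L^1}+\sum_p\|\mu_i^p(t,\cdot,h)-\mu_i^p(t,\cdot,h')\|_{L^1}\le L(T)\,\|h-h'\|_{L^1}$ on every link, first for piecewise-constant data and then, by density and uniform $BV$ bounds, for general admissible $h$. Finally, the exit-time functions $\lambda_i$ depend continuously (indeed Lipschitz-continuously on the relevant bounded sets) on the boundary fluxes $f_i(\rho_i(\cdot,a_i))$ and $f_i(\rho_i(\cdot,b_i))$ through \eqref{chapDNL:LWRPDAEeqn15}--\eqref{chapDNL:LWRPDAEeqn16}, using that these fluxes are bounded below away from zero whenever traffic is present (again via Theorem~\ref{thmdelta}), so their inverses behave well; and composition of finitely many such continuous, locally Lipschitz functions in \eqref{chapDNL:LWRPDAEeqn7} is continuous. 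This yields continuity of $h\mapsto D_p(\cdot,h)$ for each $p$, hence of $D(\cdot)$.

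The main obstacle is the global propagation step: ensuring that the tangent-vector norm does not blow up as the perturbation is passed through successive junctions and across the (a priori unboundedly many) wave fronts generated by the wave-front tracking scheme. This is where the two standing hypotheses do the real work --- the uniform supply lower bound $\delta$ prevents the diverge Riemann solver from developing the $0/0$ degeneracy exhibited in Section~\ref{subsubsecexample}, keeping the junction maps uniformly Lipschitz, and the uniform $BV$ bound on $\mu$ (with $\mu$ bounded away from zero when nonzero, from Lemma~\ref{lemma:ben}) keeps the flow-distribution matrices $A^J(t)$ in a compact set bounded away from the bad boundary, so that the turning ratios entering \eqref{fifodiv} never cause unbounded amplification. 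The bookkeeping of wave interactions and the verification that the number of fronts and interaction times stays finite on $[0,T]$ is technical but routine given the $BV$ setting; it is deferred, like the other proofs, to the appendix.
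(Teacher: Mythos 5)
Your proposal is correct and follows essentially the same route as the paper: verify the hypotheses of Theorem \ref{divwpthm} via Theorem \ref{thmdelta}/Corollary \ref{cordelta} and Lemma \ref{lemma:ben}, combine the resulting nodal well-posedness (diverge, merge, and origin via Lemma \ref{lemmabenqueue}) with the finite propagation speed of $\rho$- and $\mu$-waves to get network-level well-posedness, and then conclude continuity of the exit-time compositions in \eqref{chapDNL:LWRPDAEeqn7}. The paper's own proof is far terser but rests on exactly this chain of reasoning; your only slight imprecision is attributing a lower bound on the boundary \emph{fluxes} to Theorem \ref{thmdelta}, which bounds the \emph{supplies}, though this does not affect the argument.
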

\begin{proof}
We have shown that at each node (origin, diverge node, or merge node), the solution depends continuously on the initial and boundary values. In addition, between any two distinct nodes, the propagation speeds of either $\rho$-waves or $\mu$-waves are uniformly bounded. Thus such well-posedness continues to hold on the network level. Consequently, the vehicle travel speed $v_i(\rho_i)$ for any $I_i$ depends also continuously on the departure rates. We thus conclude that the path travel times depend continuously on the departure rates. 
\end{proof}

The assumption that the network consists of only merge and diverge nodes is not restrictive since junctions with general topology can be decomposed into a set of elementary junctions of the merge and diverge type \citep{CTM2}. In addition, junctions that are also origins/destinations can be treated in a similar way by introducing virtual links. 

Here, we would like to comment on the assumptions made in the proof of the continuity. In a recent paper \citep{Bressan-Yu}, a counterexample of uniqueness and continuous dependence of solutions is provided under certain conditions. More precisely, the authors construct the counterexample by assuming that the path disaggregation variables $\mu_i^p$ have infinite total variation. This shows the necessity of the second assumption of Theorem \ref{divwpthm}. Moreover, again in \cite{Bressan-Yu}, the authors prove that for density oscillating near
zero the solution may not be unique (even in the case of bounded total variation),
which shows the necessity of the third assumption in Lemma \ref{lemma:ben}.

\begin{remark}
\cite{Szeto2003} provides an example of discontinuous dependence of the path travel times on the path departure rates using the cell transmission model representation of a signal-controlled network. In particular, the author showed that when a queue generated by the red signal spills back into the upstream junction, the experienced path travel time jumps from one value to another. This, however, does not contradict our result presented here for the following reason: the jam density caused by the red signal in \cite{Szeto2003} does not exist in our network, which has only merge and diverge junctions (without any signal controls). Indeed, as shown in Corollary \ref{cordelta}, the supply functions at any location in the network are uniformly bounded below by a positive constant, and thus the jam density never occurs in a finite time horizon. 

The reader is reminded of the example presented in Section \ref{subsubsecexample}, where the ill-posedness of the diverge model is caused precisely by the presence of a jam density. The counterexample from \cite{Szeto2003} is constructed essentially in  the same way as our example, by using signal controls that create the jam density. 
\end{remark}

We offer some further insights here on the extension of the continuity result to second-order traffic flow models. To the best of our knowledge, second-order
models on traffic networks are mainly based on the Aw-Rascle-Zhang
model \citep{GP06,HB07,HR06} and the phase-transition model \citep{CGP10}. Solutions of the Aw-Rascle-Zhang system may present the vacuum state, which, in general, may prevent continuous dependence \citep{GH08}. On the other hand, the phase transition model on networks behaves in a way
similar to the LWR scalar conservation law model. However, continuous dependence may be violated for density close to maximal; see \cite{CGP10}. Therefore, extensions of the continuity result to second-order models
appear possible only for the phase-transition type models
with appropriate assumptions on the maximal density
achievable on the network, but this is beyond the scope of this paper.

\section{Conclusion}\label{secconclusion}

This paper presents, for the first time, a rigorous continuity result for the path delay operator based on the LWR network model with spillback explicitly captured. This continuity result is crucial to many dynamic traffic assignment models in terms of solution existence and computation. Similar continuity results have been established in a number of studies, all of which are concerned with non-physical queue models. As we show in Section \ref{subsubsecexample}, the well-posedness of a diverge model may not hold when spillback occurs. This observation, along with others made in previous studies \citep{Szeto2003}, have been the major source of difficulty in showing continuity of the delay operator.  In this paper, we bridge this gap through rigorous mathematical analysis involving the wave front tracking method and the generalized tangent vectors. In particular, by virtue of the finite propagation speeds of $\rho$-waves and $\mu$-waves, the continuity of the delay operator boils down to the well-posedness of nodal models, including models for the origins, diverge nodes, and merge nodes. Minor assumptions are made on the fundamental diagram and the path departure rates in order to provide an upper bound on the total variations of the density $\rho$ and the path disaggregation variables $\mu$, which subsequently leads to the desired well-posedness of the nodal models and eventually the continuity of the operator.

A crucial step of the above process is to estimate and bound from below the minimum network supply, which is defined in terms of local vehicle densities. In fact, if the supply of some link tends to zero, the well-posedness of the diverge junction may fail as we demonstrate in Section \ref{subsubsecexample}. This has also been confirmed by an earlier study \citep{Szeto2003}, where a wave of jam density is triggered by a signal red light and causes spillback at the upstream junction, leading to a jump in the path travel times. Remarkably, in this paper we are able to show that (1) if the supply is bounded away from zero, then the diverge junction model is well posed; and (2) the desired boundedness of the supply is a natural consequence of the dynamic network loading procedure that involves only  the simple merge and diverge junction models. This is a highly non trivial result  because it not only plays a role in the continuity proof, but also implies that gridlock can never occur in the network loading procedure in a finite time horizon. However, we note that in numerical computations gridlock may very well occur due to finite approximations and numerical errors, while our no-gridlock result is conceived in a continuous-time and analytical (i.e., non-numerical) framework.

It is true that, despite the correctness of our result regarding the lower bound on the supply, zero supply (or gridlock) can indeed occur in real-life traffic networks as a result of signal controls. In fact, this is how \cite{Szeto2003} constructs the counter example of discontinuity. Nevertheless, signal control is an undesirable feature of the dynamic network loading model for far more obvious reasons: the on-and-off signal control creates a lot of jump continuities in the travel time functions, making the existence of dynamic user equilibria (DUE) almost impossible. Given that the main purpose of this paper is to address fundamental modeling problems pertinent to DUEs, it is quite reasonable for us to avoid junction models that explicitly involve the on-and-off signal controls. Nevertheless, there exist a number of ways in which the on-and-off signal control can be approximated using continuum (homogenization) approaches. Some examples are given in \cite{Aboudolas, Gazis, HGPFY} and \cite{HG}.

Regarding the exponential decay of the minimum supply,
we point out that, as long as the jam density (gridlock)
does not occur, our result is still applicable.
From a practical point of view most networks reach
congestion during day time but are almost empty during night time.
In other words, the asymptotic
gridlock condition is never reached in reality.
Therefore, we think that our result is a reasonable approximation of reality for DUE problems.

We would also like to comment on the continuity result for other types of junction models.  \cite{GP} prove that Lipschitz
continuous dependence on the initial conditions may fail in the presence of junctions
with at least two incoming and two outgoing roads. Notably, the counterexample that they use to disprove the Lipschitz continuity is valid even when the turning ratio matrix $A^J$ is constant and not dependent on the path disaggregation variables $\mu_i^p$. It is interesting to note that, in the same book, continuity results for general networks are provided, but only for Riemann Solvers
at junctions that allow re-directing traveling entities -- a feature not allowed in the dynamic network loading model but instead is intended for the modeling of data networks.

The findings made in this paper has the following important impacts on DTA modeling and computation. (1) The analytical framework proposed for proving or disproving the well-posedness of junction models and the continuity property can be applied to other junction types and Riemann Solvers, leading to a set of continuity/discontinuity results of the delay operators for a variety of networks. (2) The established continuity result not only guarantees the existence and computation of continuous-time DUEs based on the LWR model, but also sheds light on discrete-time DUE models and provides useful insights on the numerical computations of DUEs. In particular, the various findings made in this paper provide valuable guidance on numerical procedures for the DNL problem based on the cell transmission model \citep{CTM1, CTM2}, the link transmission model \citep{LTM}, or the kinematic wave model \citep{LKWM, LZZ}, such that the resulting delay operator is continuous on finite-dimensional spaces. As a result, the existence and computation of discrete-time DUEs will also benefit from this research.

 \appendix
 
 \section{Essential mathematical background}\label{secappemb}
 
 \subsection{Wave-front tracking method}
 The wave-front tracking (WFT) method was originally proposed by \cite{Dafermos} as an approximation scheme for the following initial value problem 
 \begin{equation}\label{appcp}
 \begin{cases}
 \partial_t \rho(t,\,x)+\partial_x f\big(\rho(t,\,x)\big)~=~0
 \\
 \rho(0,\,x)~=~\hat \rho(x)
 \end{cases}
 \end{equation}
where the initial condition $\hat \rho(\cdot)$ is assumed to have {\it bounded variation} (BV) \citep{Bbook}. The WFT approximates the initial condition $\hat \rho(\cdot)$ using {\it piecewise constant} (PWC) functions, and approximates $f(\cdot)$ using {\it piecewise affine} (PWA) functions. It is an event-based algorithm that resolves a series of wave interactions, each expressed as a Riemann Problem (RP). The WFT method is primarily used for showing existence of weak solutions of conservation laws by successive refinement of the initial data and the fundamental diagram; see \cite{Bbook} and \cite{HR}. \cite{GP} extend the WFT to treat the network case and show the existence of the weak solution on a network. We provide a brief description of this procedure below.

Fix a Riemann Solver (RS) for each road junction in the network. Consider a family of piecewise constant  approximations $\hat\rho^{\varepsilon}_i(x)$ of the initial condition $\hat\rho_i(x)$ on each link  and a family of piecewise affine  approximation of the fundamental diagrams $f^{\varepsilon}_i(\rho)$, where $\varepsilon$ is a parameter such that $\hat\rho_i^{\varepsilon}(x)\to\hat\rho_i(x)$ and $f_i^{\varepsilon}(\rho)\to f_i(\rho)$ as $\varepsilon\to 0$. A WFT approximate solution on the network is constructed as follows.
 \begin{enumerate}
 \item Within each link, solve a Riemann Problem at each discontinuity of the PWC initial data. At each junction, solve a Riemann Problem with the given RS. 
 
 \item Construct the solution by gluing together the solutions of individual RPs up to the first time when two traveling waves interact, or when a wave interacts with a junction. 
 
 \item For each interaction, solve a new RP and prolong the solution up to the next time of any interaction. 
 
 \item Repeat the processes 2 - 3.
 \end{enumerate}
\noindent To show that the procedure described above indeed produces a well-defined approximate solution on the network, one needs to ensure that the following three quantities are bounded: (1) the total number of waves; (2) the total number of interactions (including wave-wave and wave-junction interactions); and (3) the {\it total variation} (TV) of the piecewise constant solution at any point in time. These quantities are known to be bounded in the single conservation law case; in fact, they all decrease in time \citep{Bbook}. However, for the network case, one needs to proceed carefully in estimating these quantities as they may increase as a result of a wave interacting with a junction, which may produce new waves in all other links incident to the same junction. The reader is referred to \cite{GP} for more elaborated discussion on these interactions.  For a sequence of approximate WFT solutions $\rho^{\varepsilon}$, $\varepsilon>0$, if one can show that the total variation of $\rho^{\varepsilon}$ is uniformly bounded, then as $\varepsilon\to 0$ a weak entropy solution on the network is obtained.

 \subsection{Generalized tangent vector}\label{secgtv}

The generalized tangent vector is a technique proposed by \cite{Bressan1993} to show the well-posedness of conservation laws, and is used later by \cite{GP} to show the well-posedness of junction models in connection with the LWR model. Its mathematical contents are briefly recapped here. 

Given a piecewise constant function $F(x) :[a,\,b]\to \mathbb{R}$, a {\it tangent vector} is
defined in terms of the shifts of the discontinuities of $F(\cdot)$.
More precisely, let us indicate by $\{x_k\}_{k=1}^N$ the discontinuities of $F$ where
$a=x_0< x_1 < \cdots <x_{N}< x_{N+1}=b$, and
by $\{F_k\}_{k=1}^{N+1}$ the values of $F$ on $(x_{k-1},\,x_k)$.
 A tangent vector of $F(\cdot)$ is a vector $\xi=(\xi_1,\ldots,\xi_N)\in \mathbb{R}^N$ such that 
for each $\epsilon>0$, one may define the corresponding perturbation of $F(\cdot)$, denoted by $F^{\epsilon}(\cdot)$ and given by
$$
F^{\epsilon}(x)=F_k\qquad x\in[x_{k-1}+\epsilon\xi_{k-1}, \, x_k+\epsilon\xi_k)
$$
\noindent for $k=1,\ldots,N+1$, where we set $\xi_0=\xi_{N+1}=0$. The norm of the tangent vector $\xi$ is defined as
\begin{equation}\label{tvnormdef}
\|\xi\|~\doteq~\sum_{k=1}^N |\xi_k| \cdot  |F(x_k+)-F(x_k-)|
\end{equation}
\noindent In other words, the norm of the tangent vector is the sum of each $|\xi_k|$ multiplied by the magnitude of the shifted jumps. 

The procedure of showing the well-posedness of a junction model using the tangent vectors is as follows. Given piecewise constant initial/boundary conditions on each link incident to the junction, one considers their tangent vectors. By showing that the norms of their tangent vectors
are uniformly bounded in time among all approximate wave-front tracking solutions, it is guaranteed that  the $L^1$ distance of any two solutions is bounded, up to a multiplicative constant, by the $L^1$ distance of their respective initial/boundary conditions. More precisely, we have the following theorem

\begin{theorem}
If the norm of a tangent vector at any time is bounded by the product of its initial norm and a positive constant, among all approximate wave-front tracking solutions, then the junction model is well posed and has a unique solution.
\end{theorem}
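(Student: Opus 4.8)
The plan is to derive all three ingredients of well-posedness --- existence, uniqueness, and continuous dependence on the data --- from a single $L^1$--Lipschitz estimate obtained via the path-length argument underlying the generalized tangent vector method. Existence of a weak entropy solution at the junction is already furnished by the convergence of the wave-front tracking scheme recalled in \ref{secappemb}, so the crux is to show that any two solutions issued from initial/boundary data $\hat u$ and $\hat v$ obey $\|u(t,\cdot)-v(t,\cdot)\|_{L^1}\le C\,\|\hat u-\hat v\|_{L^1}$ for every $t$, with $C$ the constant appearing in the hypothesis; uniqueness is then the special case $\hat u=\hat v$.

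First I would fix a wave-front tracking parameter $\varepsilon$ and join the two piecewise constant data $\hat u$ and $\hat v$ by a Lipschitz path $\theta\mapsto\gamma_\theta$, $\theta\in[0,1]$, of piecewise constant functions with $\gamma_0=\hat u$ and $\gamma_1=\hat v$, chosen so that its length in the tangent-vector norm, $\int_0^1\|\dot\gamma_\theta\|\,d\theta$, approximates $\|\hat u-\hat v\|_{L^1}$ up to $\varepsilon$. Evolving every $\gamma_\theta$ by the Riemann solver at the junction together with the in-link Riemann problems yields a one-parameter family of approximate solutions $u_\theta(t,\cdot)$; the associated vector of shift rates is precisely a generalized tangent vector to this family, and away from interaction times the hypothesis gives $\|\dot u_\theta(t,\cdot)\|\le C\,\|\dot\gamma_\theta\|$. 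Integrating in $\theta$ shows that the length of the path $\theta\mapsto u_\theta(t,\cdot)$ at time $t$ is at most $C$ times its initial length, and since the $L^1$ distance between the endpoints never exceeds the length of a connecting path, $\|u(t,\cdot)-v(t,\cdot)\|_{L^1}\le C\,(\|\hat u-\hat v\|_{L^1}+\varepsilon)$. Letting $\varepsilon\to 0$ and using that the wave-front tracking approximations converge in $L^1_{loc}$ to the weak entropy solution at the junction, the same estimate passes to the limit, giving continuous dependence; and with $\hat u=\hat v$ it gives $u\equiv v$, i.e.\ uniqueness.

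The step I expect to be the main obstacle is controlling the family $\theta\mapsto u_\theta(t,\cdot)$ across the finitely many values of $\theta$ at which the combinatorial structure of the front-tracking solution changes --- when two fronts of the family interact at the same instant, when a front reaches the junction, or when the Riemann solver switches regime (the binding demand/supply constraint changes, or, for the diverge node, the turning-ratio dependence makes $RS^{A^J}$ change mode). At such $\theta$ the map $\theta\mapsto u_\theta(t,\cdot)$ need not be differentiable, and one must verify that it is still Lipschitz in the $L^1$ distance with no upward jump in length, so that integrating the pointwise tangent-vector bound still yields the estimate above. This is where one uses the structure of the Riemann solver --- flux conservation across the junction and the property that interactions do not increase the jump-weighted total shift --- together with a bound, uniform in $\varepsilon$, on the number of such transitions. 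Once this continuity of the path is in hand, the length estimate is rigorous, and well-posedness of the junction model follows at once.
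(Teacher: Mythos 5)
Your proposal is correct and follows essentially the same route as the paper, which does not prove this theorem itself but defers entirely to Garavello and Piccoli (2006): their argument is exactly the homotopy/path-length scheme you describe, joining the two data by a Lipschitz path of piecewise constant functions, propagating the generalized tangent vectors along wave-front tracking approximations, bounding the length of the evolved path by the hypothesis, and passing to the $L^1$ limit, with the finitely many parameter values where the interaction pattern changes handled exactly as you anticipate. The one caveat worth keeping in mind is that the resulting uniqueness is uniqueness of the limit of the wave-front tracking scheme (i.e., of the Lipschitz semigroup trajectory), not uniqueness within the full class of weak entropy solutions, but that is also the sense intended by the paper and its reference.
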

 \begin{proof}
 See \cite{GP}.
 \end{proof}

 Throughout this paper, we employ the notation $(\rho_i,\,\rho_i^-)$ to represent a wave interacting with the junction from road $I_i$, where $\rho_i^-$ is the density value in front of the wave (in the same direction as the traveling wave) and $\rho_i$ is the density behind the wave. After the interaction, a new wave may be created on some road $I_j$ (it is possible that $I_j=I_i$), and we use $\rho_j^-$ and $\rho_j^+$ to denote the density at $J$ on road $I_j$ before and after the interaction, respectively.  

\begin{lemma}\label{lemma:tangent-vectors-diverge}
Consider the diverge model in Section \ref{subsecdiverge}. If a wave $(\rho_i,\,\rho_i^-)$ on $I_i$ interacts with $J$ then the shift $\xi_j$ produced on any $I_j$, as a result of the shift $\xi_i$ on $I_i$,  satisfies
\begin{equation}\label{eq:tangent-vectors}
\xi_j\big(\rho_j^+-\rho_j^-\big)~=~{\Delta q_j\over \Delta q_i}\,\xi_i \left(\rho_i ^+- \rho_i^-\right)
\end{equation}
where
$$
\Delta q_i~\doteq~f_i(\rho_i^+)-f_i(\rho_i^-),\qquad\qquad \Delta q_j~\doteq~f_j(\rho_j^+)-f_j(\rho_j^-)
$$
are the jumps in flow across these two waves. 
\end{lemma}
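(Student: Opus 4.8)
The plan is to verify \eqref{eq:tangent-vectors} by a direct computation inside the wave-front tracking construction, using the explicit diverge rule \eqref{fifodiv}, the Rankine--Hugoniot relation, and the flow conservation \eqref{flowconservation} at the junction $J$. The organizing idea is that shifting a wave in space and shifting in time the instant at which that wave meets $J$ are interchangeable operations, and that on each incident link the ``displaced mass'' $\xi_k(\rho_k^+-\rho_k^-)$ is tied, through one common time shift, to the ``displaced flux'' $\Delta q_k$; since these displaced fluxes balance across $J$ exactly as the fluxes themselves do, the claimed identity is essentially the flow conservation rewritten at the level of generalized tangent vectors.

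First I would introduce the Rankine--Hugoniot speed $\sigma_i=\Delta q_i/(\rho_i^+-\rho_i^-)$ of the wave arriving on $I_i$, with $\rho_i^+$ the state behind it. If the unperturbed wave reaches $J$ at time $\bar t$, the $\xi_i$-shifted wave reaches it at time $\bar t+\tau$ with $\tau=-\xi_i/\sigma_i$ to first order in the shift, so that $\tau\,\Delta q_i=-\,\xi_i(\rho_i^+-\rho_i^-)$. Next, on any link $I_j$ on which the interaction emits a new wave with states $\rho_j^-$ ahead and $\rho_j^+$ behind, that wave leaves the spatially fixed junction point at the shifted time $\bar t+\tau$, so its spatial shift at later times is $\xi_j=-\sigma_j\tau$ with $\sigma_j=\Delta q_j/(\rho_j^+-\rho_j^-)$. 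Eliminating $\tau$ between the two relations gives $\xi_j(\rho_j^+-\rho_j^-)=-\tau\,\Delta q_j=(\Delta q_j/\Delta q_i)\,\xi_i(\rho_i^+-\rho_i^-)$, which is \eqref{eq:tangent-vectors}. The same computation covers a wave possibly reflected back into $I_i$ and the symmetric situations in which the arriving wave comes from an outgoing link and emits waves on $I_1$ and on the other outgoing link; only the signs of the wave speeds change, and they cancel.

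To make this rigorous I would then check that the $\Delta q_k$ used above are precisely those dictated by \eqref{fifodiv}, which is the one place a short case analysis is needed: depending on which of $D_1$, $S_2/\alpha_{1,2}$, $S_3/\alpha_{1,3}$ attains the minimum, and on whether the incident states lie on the free-flow or the congested branch of the fundamental diagram, the perturbation carried by the arriving wave is transmitted only to a subset of the incident links; on the complementary links $\Delta q_j=0$ and $\xi_j=0$, so \eqref{eq:tangent-vectors} holds trivially, and in the degenerate case $\Delta q_i=0$ (the arriving wave is rejected by a supply-limited junction) at most a reflected wave appears on $I_i$ and nothing elsewhere, so there is nothing to prove. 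On the links that do carry a new wave, \eqref{fifodiv} together with $\alpha_{1,2}+\alpha_{1,3}=1$ and the conservation $\Delta q_1=\Delta q_2+\Delta q_3$ pin down each $\Delta q_j$, and the displacement computation closes.

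The hard part will be the bookkeeping rather than anything conceptual: enumerating the regimes of \eqref{fifodiv}, keeping track of which incident states sit on which branch of the fundamental diagram (this determines the demand and supply values, hence the traces $\rho_k^\pm$), and being careful with wave orientation on incoming versus outgoing links so that the signs in $\tau=-\xi_i/\sigma_i$ and $\xi_j=-\sigma_j\tau$ remain consistent. Once the regime is fixed, \eqref{eq:tangent-vectors} is, as shown above, just the flow conservation \eqref{flowconservation} at $J$ expressed in terms of the shifts.
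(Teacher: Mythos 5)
Your derivation is, in substance, the same argument the paper relies on: the paper's own proof is a short reduction to the estimate of Garavello and Piccoli, and what you have written out --- converting the spatial shift $\xi_i$ into a time shift $\tau=-\xi_i/\sigma_i$ of the interaction instant, reading off $\xi_j=-\sigma_j\tau$ for each emitted wave, and eliminating $\tau$ via the Rankine--Hugoniot relation --- is precisely that estimate made explicit. For the case in which no wave is reflected back onto the source link this is complete and correct, and your observation that links receiving no new wave satisfy $\Delta q_j=0=\xi_j$ trivially, together with the regime analysis of \eqref{fifodiv}, matches what the paper needs.

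The one place where your proposal is too quick is the reflected case. You assert that ``only the signs of the wave speeds change, and they cancel,'' but the difficulty there is not one of signs. When a wave is reflected, the post-interaction trace $\rho_i^+$ on the source link is no longer equal to the back state $\rho_i$ of the arriving wave, so the time shift produced by $\xi_i$ is $\tau=-\xi_i\,(\rho_i-\rho_i^-)/\bigl(f_i(\rho_i)-f_i(\rho_i^-)\bigr)$, whereas the right-hand side of \eqref{eq:tangent-vectors} involves $\xi_i\,(\rho_i^+-\rho_i^-)/\bigl(f_i(\rho_i^+)-f_i(\rho_i^-)\bigr)$; these coincide only when $\rho_i^+=\rho_i$ (no reflection) or when the three states happen to be collinear on the fundamental diagram. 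The paper resolves this by splitting the flux variation of the arriving wave into the part transmitted through the junction, $f_i(\rho_i^+)-f_i(\rho_i^-)$, and the part carried by the reflected wave, and applying the time-shift computation only to the transmitted part, noting that the reflected wave is a slow, large shock and therefore carries its share of the flux variation with a smaller shift, so the tangent-vector norm remains controlled. Your argument needs this additional step to cover the reflected case; without it, the identity as literally stated does not follow from the time-shift computation alone.
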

\begin{proof}
To fix the ideas, we assume that $I_i$ is  the incoming road $I_1$ (the other cases can be treated in a similar way). Let $(\rho_1,\rho_1^-)$ be the interacting wave,
then it must be that $\rho_1\leq\sigma$.

If no wave is reflected on road $I_1$ after the interaction (i.e. no new backward wave $(\rho_1,\,\rho_1^+)$ is created on $I_1$ as a result of the interaction), then
we can conclude the lemma by following the same estimate as shown in \cite{GP}.
If, on the other hand, a wave $(\rho_1,\rho_1^+)$ is reflected then
$\rho_1^-<\rho_1$, $(\rho_1,\rho_1^-)$ is a rarefaction wave, and $f(\rho_1^-)\leq f(\rho_1^+)<\rho_1$. 
In other words, part of the change in the flow caused by the interaction passes through the junction, and part of it is reflected back onto $I_1$. We may then apply the same argument as in \cite{GP} to the part that passes through the junction ($f(\rho_1^+)-f(\rho_1^-)$) and conclude the lemma. 
\end{proof}

\begin{definition}
If a $\rho$-wave from $I_i$ interacts with the junction, producing a $\rho$-wave on link $I_j$, we call $I_i$ and $I_j$ the source and recipient of this interaction, respectively. Such an event is denoted $I_i\to I_j$.
\end{definition}

We observe that $|\xi_i ( \rho_i^- - \rho_i )|$ is precisely the $L^1$-distance of two initial conditions on $I_i$ with and without the shift $\xi_i$ at the discontinuity $(\rho_i,\,\rho_i^-)$. Similarly, $|\xi_j ( \rho_j^+ - \rho_j^-)|$ is the $L^1$-distance of the two solutions on $I_j$ as a result of having or not having the initial shift $\xi_i$ on $I_i$,  respectively. Furthermore, if $\xi_i$ is the only shift in the initial condition on $I_i$, then $|\xi_i( \rho_i^- - \rho_i )|$ is the norm of the tangent vector for $I_i$ (see definition \eqref{tvnormdef}) before the interaction, and $|\xi_j( \rho_j^+ - \rho_j^-)|$ is the norm of the tangent vector for $I_j$ after the interaction. From \eqref{eq:tangent-vectors}, we have
\begin{equation}\label{tangent-vectors}
\left|\xi_j \big(\rho_j^+-\rho_j^-\big)\right|~=~{|\Delta q_j|\over|\Delta q_i|}  \left|\xi_i \left(\rho_i^- -\rho_i \right)\right|
\end{equation}
\noindent Therefore, to bound the norm of the tangent vectors one has to check that the multiplication factors $\frac{|\Delta q_j|}{|\Delta q_i|}$ remain uniformly bounded, regardless of  the number of interactions that may occur at this junction.

Let us consider the diverge junction, and assume that a wave interacts with $J$ from road $I_i$ and produces a wave on road $I_j$, where $i,\,j=1, 2, 3$. 
As in the proof of Lemma \ref{lemma:tangent-vectors-diverge}, we can restrict
to the flow passing through the junction (indeed the reflected part has the same
flow variation and smaller shift since the reflected wave is a slow big shock).
According to \eqref{fifodiv},  $\Delta q_2 =\alpha_{1,2}\Delta q_1$ and $\Delta q_3=\alpha_{1,3}\Delta q_1$. Consequently, we have the following matrix of multiplication factors ${|\Delta q_i|\over|\Delta q_j|}$:
\begin{equation}\label{thematrix}
\begin{array}{lll}
\displaystyle {|\Delta q_1|\over |\Delta q_1|}~=~1, \qquad  & \displaystyle {|\Delta q_2|\over |\Delta q_1|}~=~\alpha_{1,2},  \qquad &\displaystyle {|\Delta q_3|\over |\Delta q_1|}~=~ \alpha_{1,3}; \\\\
\displaystyle {|\Delta q_1| \over |\Delta q_2|}~=~{1\over \alpha_{1,2}},   \qquad & \displaystyle {|\Delta q_2|\over  |\Delta q_2|}~=~1, \qquad & \displaystyle {|\Delta q_3| \over |\Delta q_2|}~=~{\alpha_{1,3}\over \alpha_{1,2}}; \\\\
\displaystyle {|\Delta q_1| \over |\Delta q_3|}~=~{1\over \alpha_{1,3}}, \qquad   & \displaystyle {|\Delta q_2|\over |\Delta q_3|}~=~{\alpha_{1,2}\over\alpha_{1,3}}, \qquad  &\displaystyle {|\Delta q_3|\over |\Delta q_3|}~=~1.
\end{array}
\end{equation}
\noindent We denote this matrix by $\{Q_{ij}\}_{i,j=1, 2, 3}$. According to \cite{GP}, in order to estimate the tangent vector norm, it suffices to keep tract of just one single shift and show the corresponding tangent vector norm is bounded regardless of the number of wave interactions. To this end, we consider the only meaningful sequence of wave interaction, which is of the form $I_i\to I_j$, $I_j\to I_k$, \ldots. We observe that, if  $\alpha_{1,2}$ and $\alpha_{1,3}$ are nonzero constants, then $Q_{ij} Q_{jk}=Q_{ik}$ for any $i, j, k=1, 2, 3$.  This means that no matter how many interactions occur, the multiplication factor is always an element of the matrix $\{Q_{ij}\}$ and is uniformly bounded. Thus the diverge model with fixed turning ratios is well posed. However, in the DNL model where $\alpha_{1,2}$ and $\alpha_{1,3}$ are time-varying,  this is no longer true as we have shown in the counterexample in Section \ref{subsubsecexample}. The well-posedness requires some additional conditions to hold, and the corresponding proof needs more elaborated arguments that take into account the $\mu$-waves. These will be done in Theorem \ref{divwpthm}.

\section{Technical Proofs}

\subsection{Proof of Theorem \ref{divwpthm}}\label{appsecThm1}

\begin{proof} This proof is completed in several steps. \\

\noindent  {\bf [Step 1].} We invoke the wave-front tracking framework and the generalized tangent vector to show the desired continuous dependence. As discussed in \ref{secgtv}, it boils down to showing that the increase in the tangent vector norm, as a result of arbitrary number of wave interactions (including both $\rho$-wave and $\mu$-wave), remains bounded. \\

\noindent {\bf [Step 2].} First of all, the end of \ref{secgtv} shows that, for constant values of $\mu$ (that is, with fixed vehicle turning ratios), the tangent vector norm is uniformly bounded regardless of the interactions between the $\rho$-waves and the junction. Next, one needs to consider the case where $\mu$ changes from one value to another, i.e. when a $\mu$-wave interacts with the junction. In general, we consider two consecutive interaction times of the $\mu$-waves: $t_k$ and $t_{k+1}$, and let $\beta_k$ be the multiplication factor for the tangent vector norm of $\rho$ relative to the times $t_k$ and $t_{k+1}$. More precisely, we have $v(t_{k+1}-)=\beta_k v(t_k+)$, where $v(t)$ is the tangent vector norm of $\rho$ at time $t$. Clearly, $\beta_k$ can only take value in the following matrix where $\alpha^k_{1,2}$ and $\alpha^k_{1,3}$ are given by the constant $\mu$ value during $(t_k,\,t_{k+1})$; see \eqref{thematrix}.
\begin{equation}
Q^k~=~\{Q_{ij}^k\}~\doteq~\left[\begin{array}{ccc}
1 &  \alpha^k_{1,2} & \alpha^k_{1,3}  \vspace{0.1 in}
\\ 
1/\alpha^k_{1,2} & 1 & \alpha^k_{1,3}/\alpha^k_{1,2} \vspace{0.1 in}
\\ 
1/\alpha^k_{1,3} &  \alpha^k_{1,2}/\alpha^k_{1,3} & 1
\end{array}\right],\qquad k=0,\,1,\,2,\,\ldots
\end{equation}

Similar arguments can be carried out to all other intervals $[t_{k+1},\,t_{k+2}]$, $[t_{k+2},\,t_{k+3}]$ and so forth. In the rest of the proof, our goal is to estimate the multiplication factor for the tangent norm of $\rho$ under the WFT framework, assuming arbitrary interaction patterns of the $\rho$-waves and $\mu$-waves.\\

\noindent  {\bf [Step 3].} We define $\gamma\doteq {\delta\over C_1}>0$ where $\delta$ is stated in the hypothesis of this theorem and $C_1$ is the flow capacity of link $I_1$. Without loss of generality, we let $\gamma<{1\over 2}$. Throughout {\bf Step 3}, we assume that $\alpha_{1,2}(t)\geq \gamma$ and $\alpha_{1,3}(t)\geq \gamma$ (the other cases will be treated in {\bf Step 4}). 

As demonstrated at the end of \ref{secgtv}, in order to estimate the increase in the tangent vector norms it suffices to consider the following type of sequence in which $\rho$-waves interacts with the junction: $I_i\to I_j$ then $I_j\to I_k$; in other words, the recipient of the previous $\rho$-wave interaction is the source of the next $\rho$-wave interaction. For each $k\geq 1$, we let $I^{(k)}\in\{I_1,\,I_2,\,I_3\}$ be the recipient of the last interacting $\rho$-wave during $(t_k,\,t_{k+1})$. This gives rise to the sequence $\{I^{(k)}\}_{k\geq 1}$; see Figure \ref{figIkfig} for an illustration.

\begin{figure}[h!]
\centering
\includegraphics[width=.75\textwidth]{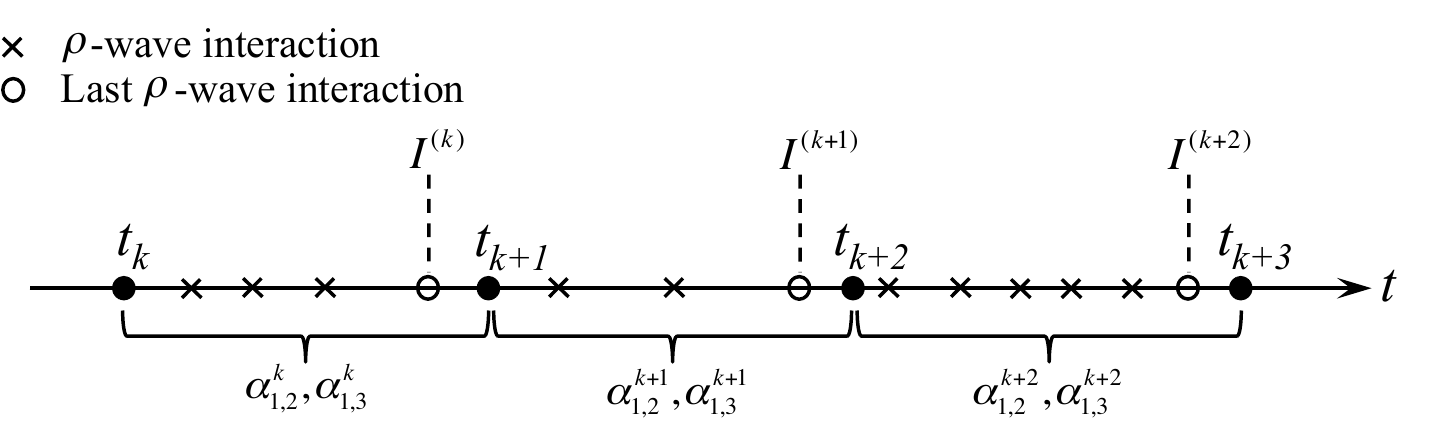}
\caption{$t_k$'s represent the times when $\mu$-waves interact with the junction, changing the turning ratios $\alpha_{1,2}$ and $\alpha_{1,3}$. The crosses represent the times at which $\rho$-waves interact with the junction. The circles indicate the last $\rho$-wave interaction within the interval during which $\mu$ is constant. $I^{(k)}$'s denote the recipients of the last interacting $\rho$-waves.}
\label{figIkfig}
\end{figure}

We make the following crucial observation. Consider any  three elements in the sequence of the form  $I^{(k)}=I_i$, $I^{(k+1)}=I_2$ and $I^{(k+2)}=I_j$ where $i,\,j\in\{1,\,2,\,3\}$. By definition, the product of the multiplication factors within $(t_{k+1},\,t_{k+3})$ is 
\begin{equation}\label{appbteqn}
Q_{i2}^{k+1}\cdot Q_{2j}^{k+2}~=~ A^{k+1}_2\cdot Q_{ij}^l\qquad\hbox{where}\qquad A^{k+1}_2\in\left\{1~,~{\alpha_{1,2}^{k+1}\over\alpha_{1,2}^{k+2}}~,~ {\alpha_{1,2}^{k+1}\over\alpha_{1,2}^{k+2}}\cdot {\alpha_{1,3}^{k+2}\over\alpha_{1,3}^{k+1}}\right\},\quad l\in\{k+1~,~k+2\}
\end{equation}

\noindent where we use the superscripts to indicate dependence on a specific time interval. The significance of \eqref{appbteqn} is that the multiplication factor can be decomposed into a term $A_2$ with a very specific structure (to be further elaborated below), and a term that would have been the multiplication factor as if the middle link $I_2=I^{(k+1)}$ were removed. Clearly, the argument above applies equally to the case where $I^{(k+1)}=I_3$, and we use 
$$
A^{k+1}_3\in \left\{1~,~{\alpha_{1,3}^{k+1}\over\alpha_{1,3}^{k+2}}~,~ {\alpha_{1,3}^{k+1}\over\alpha_{1,3}^{k+2}}\cdot {\alpha_{1,2}^{k+2}\over\alpha_{1,2}^{k+1}}\right\}
$$
\noindent to represent the term factored out if the middle link $I^{(k+1)}=I_3$ is removed. By repeating this procedure, one may eliminate all links $I_2$ and $I_3$ from the sequence $\{I^{(k)}\}$ except when they are the first or possibly the last in this sequence. As a result, the multiplicative terms $\{A_2^{k+1}\}$ and $\{A_3^{k+1}\}$ are factored out. Therefore, the entire multiplication factor for the tangent norm is bounded from above by
\begin{equation}\label{mfdec}
\prod_{m=1}^{\infty} A_2^{k_m+1}\cdot \prod_{n=1}^{\infty} A_3^{k_n+1}\cdot {1\over \gamma^2}
\end{equation}
where $\{k_m\}_{m=1}^{\infty}$ and $\{k_n\}_{n=1}^{\infty}$ are subsequences of $\{k\}$ such that $I^{(k_m+1)}=I_2$ and $I^{(k_n+1)}=I_3$. The first two terms in \eqref{mfdec} are results of eliminating $I_2$ and $I_3$ from the sequence $\{I^{(k)}\}$; the third term takes care of the first element and possibly the last element in the sequence, and is derived from the observation that $Q_{ij}^k\leq {1\over \gamma}$ for all $i,\,j\in\{1,\,2,\,3\}$ and $k\geq 0$.

 It remains to show that the first and second terms of \eqref{mfdec} are bounded, and hence the entire \eqref{mfdec} is bounded. We write, without referring explicitly to the multiplication indices, that
$$
\prod_{m=1}^{\infty} A_2^{k_m+1}~=~\prod_k {\alpha_{1,2}^{k+1}\over \alpha_{1,2}^{k+2}}\cdot \prod_k {\alpha_{1,3}^{k+2}\over \alpha_{1,3}^{k+1}}
$$
\noindent This leads to the following.
\begin{align*}
\prod_{m=1}^{\infty} A_2^{k_m+1}~=~&\exp{\left( \sum_k log{\alpha_{1,2}^{k+1}} - log{\alpha_{1,2}^{k+2}} \right)}\cdot  \exp{\left( \sum_k log{(1-\alpha_{1,2}^{k+2})} - log{(1-\alpha_{1,2}^{k+1})} \right)}
\\
~\leq~& \exp{\left( \sup_{\alpha\in[\gamma,\,1-\gamma]}{1\over |\alpha|} \cdot \sum_k \big|\alpha_{1,2}^{k+1}-\alpha_{1,2}^{k+2}\big| \right)} \cdot \exp{\left( \sup_{\alpha\in[\gamma,\,1-\gamma]} {1\over |1-\alpha|} \cdot \sum_k \big|\alpha_{1,2}^{k+2}-\alpha_{1,2}^{k+1}\big|  \right)}
\\
~\leq~& \exp{\left({1\over \gamma} \cdot TV(\mu) \right)}\cdot \exp{\left( {1\over \gamma}\cdot TV(\mu)  \right)} ~<~\infty
\end{align*}
\noindent where $TV(\mu)$ is the bounded total variation of the path disaggregation variable $\mu$.\\

\noindent {\bf [Step 4].} In this part of the proof we deal with the situation where $\alpha_{1,2}(t)<\gamma$; the other case where $\alpha_{1,3}(t)<\gamma$ is entirely similar. We begin with the following observation.
$$
f^{out}_{1}(t)~=~\displaystyle\min\left\{D_1(t),~{S_2(t)\over \alpha_{1,2}(t)},~{S_3(t)\over\alpha_{1,3}(t)}\right\}~=~\min\left\{D_1(t)~,~{S_3(t)\over \alpha_{1,3}(t)}\right\}
$$

\noindent since ${S_2(t)\over \alpha_{1,2}(t)}>{S_2(t)\over\gamma}={S_2(t) C_1\over \delta}\geq C_1\geq D_1(t)$. In other words, the minimum is never attained at ${S_2(t)\over \alpha_{1,2}(t)}$, and any $\rho$-wave interacting from $I_2$ will not generate new waves on $I_1$ or $I_3$ since $f^{out}_{1}(t)$ does not change before and after the interaction. So the only interactions that may change the tangent vector norm are $I_1\to I_3$ and $I_3\to I_1$, and the corresponding multiplication factors are respectively $\alpha_{1,3}(t)$ and ${1\over \alpha_{1,3}(t)}$. 

Consider any sequence of interaction times $\{t_k\}$ of the $\mu$-waves. Since the interactions $I_1\to I_1$ or $I_3\to I_3$ do not generate any increase in the tangent norm, we only need to consider the following sequence of interactions: $\ldots \to I_1 \to I_3 \to I_1 \to I_3\ldots$. The resulting multiplication factor is bounded by constant of the form
$$
\prod_{k} {\alpha_{1,3}^{k}\over \alpha_{1,3}^{k+1}} \cdot \left({1\over 1-\gamma}\right)^2
$$
\noindent where the second multiplicative term deals with the first and the last element in the sequence since $\max\{\alpha_{1,3}^k,\,{1\over\alpha^k_{1,3}}\}={1\over\alpha_{1,3}^k}<{1\over 1-\gamma}$ for all $k$. We may then proceed in the same way as {\bf Step 3} and conclude that this multiplication factor is bounded, provided that $TV(\mu)<\infty$. 
\end{proof}

\subsection{Proof of Theorem \ref{thmdelta}}\label{subsecappthmdelta}

\begin{proof}
Since $\delta_k$ is defined in terms of the supplies, it suffices for us to focus only on those densities such that $\rho_i(t,\,x) > \rho_i^c$ for $I_i\in\mathcal{\tilde A}$, $(t,\,x)\in[0,\,T]\times[a_i,\,b_i]$. It is also useful to keep in mind that densities beyond the critical density always propagate backwards in space. The proof is divided into several steps. \\

\noindent {\bf [Step 1].} $k=0$. We have $t\in\big[0,\,{L\over \lambda}\big)$. Since the network is initially empty, all the supplies $S_i(\rho_i(t,\,x))$, $I_i\in\mathcal{\tilde A},\,(t,\,x)\in\{0\}\times[a_i,\,b_i]$, are maximal and equal to the respective flow capacities at $t=0$. Afterwards, a higher-than-critical density or a lower-than-maximum supply  can only emerge from the downstream end of a link and propagate backwards along this link. Moreover, these backward waves can never reach the upstream end of the link within $\big[0,\,{L\over\lambda}\big)$ since $L\over\lambda$ is the minimum link traversal time for backward waves. 

A higher-than-critical density (backward wave) can only arise in one of the following cases.

\noindent {\bf Case (1).} {\it A forward wave from $I_1$ (see Figure \ref{figtwojunc}) interacts with the diverge junction and creates a backward wave on $I_1$}. 

\noindent  {\bf Case (2).} {\it A forward wave from $I_4$ (or $I_5$) interacts with the merge junction  and creates a backward wave on $I_4$ or $I_5$}. 

\noindent  {\bf Case (3).} {\it A forward wave interacts with a destination from the relevant virtual link and creates a backward wave on the same virtual link}.

Each individual case will be investigated in detail below.

\begin{itemize}
\item {\bf Case (1).} We use the same notation shown in Figure \ref{figtwojunc}. According to the reason provided above,  $S_i(t)\equiv C_i$, $i=2,\,3$ for $t\in\big[0,\,{L\over \lambda}\big)$. Let the time of the wave interaction be $\bar t$. Recall from  \eqref{fifodiv} that
\begin{equation}\label{recalldiverge}
f^{out}_{1}(\bar t)~=~\min\left\{D_1(\bar t),\,{C_2\over \alpha_{1,2}(\bar t)},\, {C_3 \over \alpha_{1,3}(\bar t)}\right\}
\end{equation}
 If the minimum is attained at $D_1(\bar t)$, then the entrance of $I_1$ will remain in the uncongested phase, i.e. $\rho_1(t+,\,b_1-)\leq \rho^c_1$ after the interaction. Hence, no lower-than-maximum supply is generated. On the other hand, if say ${C_2\over\alpha_{1,2}(\bar t)}$ is the smallest among the three, then 
\begin{equation}\label{deltaeqn1}
S_1(\rho_1(\bar t+,\,b_1-))~\geq~f_1(\rho_1(\bar t+,\,b_1-))~=~f^{out}_1(\bar t+)~=~{C_2\over \alpha_{1,2}(\bar t+)}~\geq~C_2~\geq~C^{min}
\end{equation}
In summary, the supply values corresponding to the backward waves generated at $I_1$, if any, are uniformly above $C^{min}$.

\item {\bf Case (2).} We turn to the merge junction shown in Figure \ref{figtwojunc}, and note $S_6(t)\equiv C_6$ for $t\in\big[0,\,{L\over \lambda}\big)$. As usual, we let $\bar t$ be the time of interaction. There are two further cases for the merge junction model as shown in Figure \ref{figmerge}. We first consider the situation illustrated in Figure \ref{figmerge}(a). Clearly, we have that 
\begin{align}
\label{deltaeqn2}
S_4(\rho_4(\bar t+,\,b_4-))&~\geq~f_4(\rho_4(\bar t+,\,b_4-))~=~f_{4}^{out}(\bar t+)~=~p C_6~\geq~ \bar p C^{min}
\\
\label{deltaeqn3}
S_5(\rho_5(\bar t+,\,b_5-))&~\geq~f_5(\rho_5(\bar t+,\,b_5-))~=~f_{5}^{out}(\bar t+)~=~(1-p)C_6~\geq~\bar p C^{min}
\end{align}
That is, the supply values of the backward waves generated at the downstream ends of $I_4$ and $I_5$ are uniformly above $\bar p C^{min}$. 

For the situation depicted in Figure \ref{figmerge}(b), we first note that the coordinates of $Q'$ is $(pC_6,\,(1-p)C_6)$, and the coordinates of the solution $Q=(f^{out,*}_{4},\,f^{out,*}_{5})$ either satisfy 
\begin{equation}\label{qlc6}
f^{out,*}_{4}~<~ pC_6 \qquad\hbox{and}\qquad  f^{out,*}_{5}~>~(1-p)C_6
\end{equation}
as shown exactly in Figure \ref{figmerge}(b), or 
\begin{equation}\label{qgc6}
f^{out,*}_{4}~>~p C_6\qquad\hbox{and}\qquad  f^{out,*}_{5}~<~(1-p)C_6
\end{equation}
Taking \eqref{qlc6} as an example (the \eqref{qgc6} case can be treated similarly), we have 
\begin{equation}\label{qqc66}
S_5(\rho_5(\bar t+,\,b_5-))~\geq~f_5(\rho_5(\bar t+,\,b_5-))~>~(1-p)C_6~\geq~\bar p C^{min};
\end{equation}
\noindent and no increase in density is produced at the downstream end of link $I_4$ since its exit flow is equal to the demand. 

To sum up, the supply values corresponding to the backward waves generated at $I_4$ or $I_5$, if any, are uniformly above $\bar p C^{min}$.

\item {\bf Case (3).} For each destination $d\in\mathcal{D}$, let $S^d$ be its supply and denote by $vl$ the virtual link connected to $d$. If $S^d\geq C_{vl}$ then no backward waves can be generated on this link and the supply value on the link is always equal to $C_{vl}$. If $S^d<C_{vl}$, then only one higher-than-critical density can exist on this virtual link, that is, $\rho$ such that $\rho>\rho_{vl}^c$ and $f_{vl}(\rho)=S^d$. Clearly, its supply value is equal to $S^d$.

To sum up, the supply value corresponding to the backward waves generated at any virtual link, if any, is bounded below by $\delta^{\mathcal{D}}$.
\end{itemize}

Finally, we notice that all the backward waves exhaustively described above originate from the downstream end of a link, and they cannot reach the upstream end of the same link within period $\big[0,\,{L\over\lambda}\big)$. Thus, these backward waves cannot bring further reduction in the supply values by means of interacting with the junctions.  We conclude that during period $\big[0,\,{L\over \lambda}\big)$, the minimum supply in the network, $\delta_0$, is bounded below; that is,
\begin{equation}\label{deltaeqn35}
\delta_0~\geq~\min\left\{\delta^{\mathcal{D}}~,~\bar p C^{min}\right\}
\end{equation}

\noindent {\bf [Step 2].} We move on to $k\geq 1$. In addition to Case (1)-(3), which do not bring any  supply values below $\min\{\delta^{\mathcal{D}},\,\bar p C^{min}\}$, two more cases may arise in which higher-than-critical densities may be generated as a result of backward waves interacting with junctions:

\noindent {\bf Case (4).} {\it A backward wave from $I_2$ (or $I_3$, see Figure \ref{figtwojunc}) interacts with the diverge junction and creates a backward wave in $I_1$}. 

\noindent  {\bf Case (5).} {\it A backward wave from $I_6$ (see Figure \ref{figtwojunc}) interacts with the merge  junction and creates a backward wave in $I_4$ or $I_5$}.

Case (4) and Case (5) are analyzed in detail below. 

\begin{itemize}
\item {\bf Case (4).}  Without loss of generality, we assume that the backward wave that interacts with the diverge junction is coming from $I_2$, and has the density value $\rho_2^-\in(\rho_2^c,\,\rho_2^{jam}]$. Let $\bar t$ be the time of the interaction. In view of \eqref{recalldiverge}, if the minimum is attained at $D_1(\bar t)$, then the interaction does not bring any increase in density at the downstream end of $I_1$, hence no decrease in the supply there.

If the minimum is attained at ${S_2(\bar t)\over\alpha_{1,2}(\bar t)}$, we deduce in a similar way as  \eqref{deltaeqn1} that
$$
S_1(\rho_1(\bar t+,\,b_1-))~\geq~f_1(\rho_1(\bar t+,\,b_1-))~=~f^{out}_{1}(\bar t+)~=~{S_2(\rho_2^-)\over \alpha_{1,2}(\bar t+)}~\geq~S_2(\rho_2^-)~\geq~\delta_{k-1}
$$
\noindent The last inequality is due to the fact that  a backward wave such as $\rho_2^-$ must be created at  the downstream end of $I_2$ at a time earlier than $k{L\over \lambda}$, thus its supply value $S_2(\rho_2^-)$ must be bounded below by $\delta_{k-1}$. If the minimum is attained at ${S_3(\bar t)\over\alpha_{1,3}(\bar t)}$, we have
$$
S_1(\rho_1(\bar t+,\,b_1-))~\geq~f_1(\rho_1(\bar t+,\,b_1-))~=~f^{out}_{1}(\bar t+)~=~{S_3(t)\over \alpha_{1,3}(\bar t+)}~\geq~S_3(\bar t+)~\geq~\delta_{k-1}
$$
\noindent The last inequality is because any lower-than-maximum supply on $I_3$ must be created in the previous time interval.

To sum up, the supply values corresponding to the backward waves generated at $I_1$, if any, are bounded below by $\delta_{k-1}$.

\item {\bf Case (5).} For the merge junction, we begin with the case illustrated in Figure \ref{figmerge}(a). Assuming the backward wave that interacts with the merge junction from $I_6$ has the density value $\rho_6^-\in(\rho_6^c,\,\rho_6^{jam}]$. Similar to \eqref{deltaeqn2}-\eqref{deltaeqn3}, we have 
\begin{align}
\label{deltaeqn4}
S_4(\rho_4(\bar t+,\,b_4-))&~\geq~f_4(\rho_4(\bar t+,\,b_4-))~=~f_{4}^{out}(\bar t+)~=~p S_6(\rho_6^-)~\geq~ \bar p \delta_{k-1}
\\
\label{deltaeqn5}
S_5(\rho_5(\bar t+,\,b_5-))&~\geq~f_5(\rho_5(\bar t+,\,b_5-))~=~f_{5}^{out}(\bar t+)~=~(1-p)S_6(\rho_6^-)~\geq~\bar p  \delta_{k-1}
\end{align}
where the last inequalities are due to the same reason provided in {\bf Case (4)}. The case illustrated in Figure \ref{figmerge}(b) can be treated in the same way as \eqref{qlc6}-\eqref{qqc66}.

To sum up, the supply values corresponding to the backward waves generated at $I_4$ or $I_5$, if any, must be bounded below by $\bar p\delta_{k-1}$.
\end{itemize}

\noindent So far, we have shown that for $t\in\big[k{L\over\lambda},\,(k+1){L\over \lambda}\big)$ where $k\geq 1$, the presence of higher-than-critical densities, as exhaustively illustrated through {\bf Case (1)- Case (5)}, brings supply values throughout the entire network that are bounded below by 
\begin{equation}\label{deltaeqn6}
\min\Bigg\{\underbrace{\min\big\{\delta^{\mathcal{D}},\,\bar p C^{min}\big\}}_{\hbox{\scriptsize Case (1)-(3)}}~,~ \underbrace{\delta_{k-1}}_{\hbox{\scriptsize Case (4)}}~,~\underbrace{\bar p\delta_{k-1}}_{\hbox{\scriptsize Case(5)}}\Bigg\}~=~
\min\left\{\delta^{\mathcal{D}}~,~\bar p C^{min}~,~\bar p\delta_{k-1}  \right\}
~=~
\min\left\{\delta^{\mathcal{D}}~,~\bar p  \delta_{k-1}\right\}
\end{equation}

\noindent {\bf [Step 3].} Recall \eqref{deltaeqn35} and \eqref{deltaeqn6}:
\begin{equation}
\delta_0~\geq~\min\left\{\delta^{\mathcal{D}}~,~\bar p C^{min}\right\},\qquad\qquad \delta_{k}~\geq~\min\left\{\delta^{\mathcal{D}}~,~\bar p  \delta_{k-1}\right\}\qquad \forall k \geq 1
\end{equation}
We define $\hat\delta_0\doteq \min\left\{\delta^{\mathcal{D}}~,~\bar p C^{min}\right\}$ and $\hat \delta_k\doteq \min\left\{\delta^{\mathcal{D}}~,~\bar p  \hat \delta_{k-1}\right\}$, $k\geq 1$. Clearly, $\bar p \hat\delta_{k-1}\leq \delta^{\mathcal{D}}$ for all $k\geq 1$, which implies that $\hat \delta_k=\min\left\{\delta^{\mathcal{D}}~,~\bar p \hat\delta_{k-1}\right\}=\bar p \hat\delta_{k-1}$, $\forall k\geq 1$. Thus 
$$
\delta_0~\geq~\hat\delta_0~=~\min\left\{\delta^{\mathcal{D}},\,\bar p C^{min}\right\},\qquad\qquad \delta_k~\geq~\hat \delta_k~=~\bar p \hat\delta_{k-1}~=~\bar p^k \hat\delta_0~=~\bar p^k \min\left\{\delta^{\mathcal{D}}~,~\bar p C^{min}\right\}\qquad\forall k\geq 1
$$
\end{proof}

\subsection{Proof of Lemma \ref{lemma:ben}}\label{subsecapplemma:ben}

\begin{proof}
The proof is divided into a few steps.\\

\noindent {\bf [Step 1].}
Notice that assumption 3 implies that the departure rates $\{h_p(\cdot),\,p\in\mathcal{P}\}$ are non-zero on a finite set of intervals. Indeed, let $n$ be the
number of  intervals where $h_p(\cdot)$ does not vanish then
$n\epsilon\leq TV(h_p)<\infty$, which implies that $n$ is finite. \\

\noindent {\bf [Step 2].}
Using the assumption that each fundamental diagram $f_i(\cdot)$ has nonvanishing derivative, we have, at each origin,  that the bounded total variation of the density must imply bounded total variation of the flow, and vise versa.
Therefore, the assumption on the bounded variation of $h_p(\cdot)$ implies
that the density and path disaggregation variable are of bounded variation on the virtual link incident to that origin.\\

\noindent {\bf [Step 3].}
Assumption 1 guarantees
that, on each link, all waves of the type $(0,\rho)$ or $(\rho,0)$ are contact
discontinuities for $\rho$ sufficiently small and, in particular, travel with
a constant speed. Moreover, all $\mu$-waves travel with the same speed 
for low densities. Therefore, taking into account  {\bf Step 1}, whenever 
$\mu$ is non-zero, it is uniformly bounded away from zero on the entire network.
In particular there exists $\epsilon'>0$ so that $\mu_i^p(t,\,x) \in \{0\}\cup [\epsilon',1]$ for every $t,x, i$ and $p$. 
Therefore, at diverging junctions, the coefficients $\alpha_{1,2}(t)$
and $\alpha_{1,3}(t)$ satisfy the same properties.\\

\noindent {\bf [Step 4].}
Let us now turn to the total variation of the path disaggregation variables.
We know, from {\bf Step 2}, that $\mu$ has bounded variation on virtual links
incident to the origins. We also know from {\bf Step 3} that $\mu$ is bounded away from zero whenever it is non-zero; and the same holds for the turning ratios  at diverge junctions. Consider a $\mu$-wave $(\mu_l,\mu_r)$,
then its variation $|\mu_l-\mu_r|$ can change only upon interaction with 
diverge junctions. More precisely, denote by  $(\mu_l^-,\mu_r^-)$ and $(\mu_l^+,\mu_r^+)$ the wave respectively before and after the interaction, we have
$|\mu_l^+-\mu_r^+|\leq \frac{1}{\alpha}|\mu_l^--\mu_r^-|$
where $\alpha=\alpha_{1,2}$ or $\alpha=\alpha_{1,3}$ and is bounded away from zero. Since the $\mu$-waves travel only forward on the links with uniformly bounded speed, we must have that the interactions with diverge junctions can occur only finite number of times.  Thus we conclude that $\mu$ has bounded total variation.
\end{proof}

\subsection{Proof of Lemma \ref{lemmabenqueue}}\label{subsecapplemmabenqueue}

\begin{proof}
Let $\zeta_p^j$ be the shift of the $j$-th jump of $h_p(\cdot)$, which occurs at time $t_j$,
then the expression of $q_s(\cdot)$ is possibly affected on the interval $[t_j,t_j+\zeta_p^j]$
(assuming $\zeta_p^j>0$). More precisely, if $ q_s(\bar t)>0$ then no wave is generated on the virtual link while a shift in the queue is generated with $\eta_s=\Delta_j h_p\cdot\zeta_p^j$,
where $\Delta_j h_p$ is the jump in $h_p(\cdot)$ occurring at time $t_j$. On the other hand, if $ q_s(\bar t)=0$ then a wave $(\rho^-,\rho^+)$
is produced at time $t_j$ on the virtual link with shift $\xi^{i}_\rho=\lambda\cdot \zeta_p^j$ where $\lambda$ is the speed
of the wave $(\rho^-,\rho^+)$. We can then compute:
$$
\xi^{i}_\rho\cdot (\rho^+-\rho^-)~=~\zeta_p^j\cdot\lambda\cdot (\rho^+-\rho^-) 
~=~\zeta_p^j\cdot\frac{f(\rho^+)-f(\rho_-)}{\rho^+-\rho^-}\cdot (\rho^+-\rho^-)
$$
\noindent Moreover, $f(\rho^+)-f(\rho^-)=\Delta_j h_p$, thus
the norm of the tangent vector generated is the same as before.

To prove that the norm of the tangent vector
$(\eta_s,\xi^i_\rho,\xi^{i,p}_{\mu})$ is bounded for all times, let us first
consider a backward-propagating wave $(\rho^-,\rho^+)$ interacting
with the queue $q_s$, and with a shift $\xi^{i}_\rho$. Then we can write:
\[
\dot{q}^+_s-\dot{q}^-_s=f(\rho^-)-f(\rho^+)
\]
\noindent where  $\dot{q}^{-}_s$ and $\dot{q}^{+}_s$ are the time-derivatives of $q_s(\cdot)$
before and after the interaction, respectively. Therefore,
denoting $\lambda$ the speed of the wave $(\rho^-,\rho^+)$,  we get:
$$
\eta_s~=~\frac{\xi^{i}_\rho}{\lambda}\cdot (f(\rho^-)-f(\rho^+))~=~
\frac{\xi^{i}_\rho\cdot (\rho^- - \rho^+)}{f(\rho^-)-f(\rho^+)}\cdot (f(\rho^-)-f(\rho^+))~=~\xi^{i}_\rho\cdot (\rho^- - \rho^+)
$$
\noindent Thus the norm of the tangent vector is bounded.

The norm of the tangent vector $(\eta_s,\xi^i_\rho,\xi^{i,p}_{\mu})$ 
may also change when the queue $q_s$ becomes empty, but this can
be treated in the same way as in \cite{HKP}.
\end{proof}

\end{document}